\documentclass{amsart}

\usepackage{amssymb}
\usepackage{mathtools}
\usepackage[margin=1in]{geometry}
\usepackage{comment}

\theoremstyle{plain}
\newtheorem{theorem}{Theorem}[section]
\newtheorem{proposition}[theorem]{Proposition}
\newtheorem{lem}[theorem]{Lemma}
\newtheorem{cor}[theorem]{Corollary}
\newtheorem{rem}[theorem]{Remark}

\theoremstyle{definition}
\newtheorem*{acknowledgement}{Acknowledgements}

\newcommand{\PP}{\mathbb{P}}

\newcommand{\EE}{\mathbb{E}}
\newcommand{\NN}{\mathbb{N}}
\newcommand{\RR}{\mathbb{R}}
\newcommand{\CC}{\mathbb{C}}
\newcommand{\Lower}{D}

\numberwithin{equation}{section}
\mathtoolsset{showonlyrefs}

\begin{document}
	
	\title[On an Erd\H{o}s--Kac-type conjecture]{On an Erd\H{o}s--Kac-type conjecture of Elliott}
	\author{Ofir Gorodetsky}
	\address{Mathematical Institute, University of Oxford, Oxford, OX2 6GG, UK}
	\email{gorodetsky@maths.ox.ac.uk}
	\thanks{This project has received funding from the European Research Council (ERC) under the European Union's Horizon 2020 research and innovation programme (grant agreement No 851318).}
	\author{Lasse Grimmelt}
	\address{Mathematical Institute, University of Oxford, Oxford, OX2 6GG, UK}
	\email{lasse.grimmelt@maths.ox.ac.uk}

\begin{abstract}
Elliott and Halberstam proved that $\sum_{p<n} 2^{\omega(n-p)}$ is asymptotic to $\phi(n)$. In analogy to the Erd\H{o}s--Kac Theorem, Elliott conjectured that if one restricts the summation to primes $p$ such that $\omega(n-p)\le 2 \log \log n+\lambda(2\log \log n)^{1/2}$ then the sum will be asymptotic to $\phi(n)\int_{-\infty}^{\lambda} e^{-t^2/2}dt/\sqrt{2\pi}$. We show that this conjecture follows from the Bombieri--Vinogradov Theorem. We further prove a related result involving Poisson--Dirichlet distribution, employing deeper lying level of distribution results of the primes.
\end{abstract}
\maketitle
\section{Introduction and motivation}
Let $\omega(m) = \sum_{p \mid m}1$ be the prime divisor function. Elliott and Halberstam proved that \cite{ElliottH}
\begin{equation}\label{eq:elliott}
\sum_{p<n} 2^{\omega(n-p)} = \phi(n) + O\left(\frac{n}{\log n}(\log \log n)^2\right)
\end{equation}
holds for $n \ge 3$. Here and later $p$ denotes a prime and $\phi$ is Euler's totient function. Elliott gave a talk at the meeting held in Urbana--Champaign, June 5–7, 2014, in memory of Paul and Felice Bateman, and Heini Halberstam, where he revisited his works with Halberstam. A telegraphic representation of the talk was published, which contains the following conjecture \cite[Conj.~B]{Elliott}: for each real $\lambda$,	
\begin{equation}\label{eq:conjb} \sum_{\substack{p <n\\ \omega(n-p)\le 2\log \log n+ \lambda (2\log \log n)^{1/2}}}2^{\omega(n-p)} \to \frac{\phi(n)}{\sqrt{2\pi}}\int_{-\infty}^{\lambda}e^{-u^2/2}du
\end{equation}
as $n \to \infty$. In this note we establish the conjecture.
\begin{theorem}\label{thm:e}
The asymptotic relation \eqref{eq:conjb} holds for each real $\lambda$. 
\end{theorem}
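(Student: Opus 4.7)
The plan is to prove Theorem~\ref{thm:e} via the Selberg--Delange approach, establishing a uniform asymptotic for the generating function
$$S_w(n) := \sum_{p<n} w^{\omega(n-p)}$$
for $w$ in a fixed complex neighborhood of $2$ and then extracting the Gaussian profile by Taylor expansion at $w=2$. Since $S_2(n)\sim\phi(n)$ by~\eqref{eq:elliott}, the quantity $\EE_n[F(\omega(n-p))]:=S_2(n)^{-1}\sum_{p<n}2^{\omega(n-p)}F(\omega(n-p))$ is essentially the expectation of $F(\omega(n-p))$ under the $2^{\omega}$-weighted probability measure on primes $p<n$. Writing $L:=\sqrt{2\log\log n}$, Theorem~\ref{thm:e} is the claim that $(\omega(n-p)-2\log\log n)/L$ under $\EE_n$ converges in distribution to a standard Gaussian; by Curtiss's theorem and continuity of the Gaussian CDF this reduces to showing
$$e^{-sL}\cdot\frac{S_{2e^{s/L}}(n)}{S_2(n)}\longrightarrow e^{s^2/2}\qquad (n\to\infty,\ s\in\RR).$$
This will follow from an asymptotic $S_w(n)\sim n\cdot C(w;n)(\log n)^{w-2}$ with $C(\,\cdot\,;n)$ analytic in $w$, uniformly bounded above and below as $n\to\infty$, and $C(2;n)=\phi(n)/n$.

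For the asymptotic of $S_w(n)$, I would apply the convolution identity $w^{\omega(m)}=\sum_{d\mid m,\,d\text{ sqfree}}(w-1)^{\omega(d)}$ to obtain
$$S_w(n) = \sum_{d\text{ sqfree}}(w-1)^{\omega(d)}\,\pi(n;d,n),\qquad \pi(n;d,n):=\#\{p<n:\ p\equiv n\pmod d\}.$$
Split at $D:=n^{1/2}/(\log n)^{B}$ for $B$ large. For $d\le D$ with $(d,n)=1$, the Bombieri--Vinogradov theorem (combined with Cauchy--Schwarz and a Barban--Davenport--Halberstam-style $L^2$ bound to absorb the weights $|w-1|^{\omega(d)}\ll\tau(d)^{O(1)}$ for $w$ in a bounded region) replaces $\pi(n;d,n)$ by $\pi(n)/\phi(d)$ at a total cost of $o(n)$. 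For $d>D$, the trivial bound $\pi(n;d,n)\le 1+n/d$ together with a Rankin-type estimate for $\sum_{d>D}\mu^2(d)|w-1|^{\omega(d)}/d$ suffices, and terms with $(d,n)>1$ are negligible since they force $p\mid(d,n)$. The surviving main term $\pi(n)\sum_{d\text{ sqfree},(d,n)=1}(w-1)^{\omega(d)}/\phi(d)$ is evaluated by the classical Mertens-type analysis of $\prod_{p\nmid n}(1+(w-1)/(p-1))$ as $\pi(n)\cdot C(w;n)(\log n)^{w-1}(1+o(1))$; the identity $C(2;n)=\phi(n)/n$ is exactly the normalization consistent with~\eqref{eq:elliott}.

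Substituting $w=2e^{s/L}$ and Taylor-expanding yields $(2e^{s/L}-2)\log\log n = sL+s^2/2+O(1/L)$, while $C(2e^{s/L};n)/C(2;n)=1+O(1/L)$ by analyticity of $C$ in $w$. Combining,
$$\frac{S_{2e^{s/L}}(n)}{S_2(n)}\sim (\log n)^{2e^{s/L}-2}\cdot\frac{C(2e^{s/L};n)}{C(2;n)} = e^{sL+s^2/2+o(1)},$$
delivering the required MGF convergence. The main obstacle is obtaining the Selberg--Delange-type asymptotic for $S_w(n)$ uniformly in a complex neighborhood of $w=2$: the Bombieri--Vinogradov step must accommodate the multiplicative weights $(w-1)^{\omega(d)}$ uniformly (hence the appeal to an $L^2$ bound rather than to maximal bounds, whose trivial max $\tau(d)^{O(1)}$ over $d\le n^{1/2}$ is insufficient), and the Mertens analysis of the Euler product $\prod_{p\nmid n}(1+(w-1)/(p-1))$ must be uniform in $w$. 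Both are routine since $|w-1|$ stays bounded; once in place, Curtiss's theorem converts MGF convergence into the desired CDF asymptotic at any fixed real $\lambda$.
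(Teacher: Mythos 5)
Your strategy is genuinely different from the paper's. You propose a Selberg--Delange / moment-generating-function argument: estimate $S_w(n)=\sum_{p<n}w^{\omega(n-p)}$ uniformly for $w$ in a real (or complex) neighbourhood of $2$, then set $w=2e^{s/L}$ and Taylor-expand. The paper instead follows the Granville--Soundararajan method-of-moments framework: it reduces Theorem~\ref{thm:e} to a statement about the level of distribution of $X_n$ (Proposition~\ref{prop:titch}), and proves that via a divisor-sum decomposition of $2^{\omega(n-p)}$ together with Brun--Titchmarsh and Bombieri--Vinogradov. Both approaches are classical for Erd\H{o}s--Kac-type results and, in principle, lead to the same places; the key technical content in either case is the same level-of-distribution input, so the choice is largely one of packaging. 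However, your write-up has a concrete gap that makes it incomplete as stated.

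The gap is your treatment of the range $d>D$. You claim that the trivial bound $\pi(n;d,n)\le 1+n/d$ together with a Rankin-type tail estimate suffices, but this is false: the contribution of $d>D$ is of the \emph{same order} as the main term, not an error term. To see this at $w=2$: the main term you retain is
\[
\pi(n)\sum_{\substack{d\le D\\(d,n)=1}}\frac{\mu^2(d)}{\phi(d)} \sim \frac{n}{\log n}\cdot \frac{\phi(n)}{n}\log D \sim \frac{\phi(n)}{2},
\]
since $D\asymp n^{1/2}$ forces $\log D\sim\tfrac12\log n$. But $S_2(n)\sim\phi(n)$ by~\eqref{eq:elliott}, so the contribution of $d>D$ is $\sim\phi(n)/2$. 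It cannot be discarded; indeed without it you would obtain the wrong constant by a factor of $2$. There is no cancellation available either (at $w=2$ every term is nonnegative). This is precisely why Elliott--Halberstam, and this paper, perform a \emph{divisor switch}: for $d>D$ one replaces $d$ by the complementary divisor $s=(n-p)/d<n/D$, and then applies Brun--Titchmarsh in the middle range $D<s<(n-p)/D$ and a further splitting $\mu^2(d)=\sum_{e^2\mid d}\mu(e)$ combined with Bombieri--Vinogradov in the range $s\le D$. If you incorporate that hyperbola-method decomposition (and carry the $(w-1)^{\omega(\cdot)}$ weights through it, which is routine for $|w-1|$ bounded), your Selberg--Delange route should go through; the Bombieri--Vinogradov step and the uniform Mertens analysis you sketch are both fine. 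As written, though, the proposal is missing the central idea that makes the error terms small, and the claimed asymptotic $S_w(n)\sim n\,C(w;n)(\log n)^{w-2}$ is not established.
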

Theorem~\ref{thm:e} can be restated as follows. Let $X_n$ be a random variable taking the value $m \in \{1,2,\ldots,n\}$ with probability proportional to $2^{\omega(m)} \mathbf{1}_{n-m \textup{ a prime}}$. Then
\begin{equation}\label{eq:prob}
	\frac{\omega(X_n)-2\log \log n}{(2\log \log n)^{1/2}} \overset{d}{\longrightarrow} N(0,1), \qquad n \to \infty,
\end{equation}
where $N(0,1)$ is the standard gaussian distribution and the arrow indicates convergence in distribution. The proof of Theorem~\ref{thm:e} will follow from 
\begin{proposition}\label{prop:titch}
Fix $k,B \ge 1$. For any $1 \le R \le n^{1/100}$ and $S \subseteq \NN$ we have
	\begin{equation}\label{eq:primes} 
		\sum_{\substack{a \le R,\, a \in S\\ \omega(a)\le k\\ (a,n)=1}}\mu^2(a)\bigg| \PP( a \textup{ divides } X_n) - \frac{2^{\omega(a)}}{a}\bigg| \ll_{k,B} \frac{(\log \log n)^3 +\log R}{\log n}	\sum_{\substack{a \le R,\, a \in S\\ \omega(a)\le k\\(a,n)=1}}\frac{\mu^2(a)}{a} +\frac{1}{\log^B n}.
	\end{equation}
\end{proposition}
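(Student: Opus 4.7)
Setting $D = \sum_{p<n}2^{\omega(n-p)}$ and $N_a = \sum_{p<n,\,a\mid n-p}2^{\omega(n-p)}$, we have $\PP(a\mid X_n) = N_a/D$ and hence $\PP(a\mid X_n) - 2^{\omega(a)}/a = (aN_a - 2^{\omega(a)}D)/(aD)$. By \eqref{eq:elliott} together with $\phi(n)\gg n/\log\log n$, $D = \phi(n)(1 + O((\log\log n)^3/\log n))$, so the left-hand side of \eqref{eq:primes} reduces, up to a multiplicative $(1+O((\log\log n)^3/\log n))$ (which absorbs into the first term on the right), to bounding
\[
\frac{1}{\phi(n)}\sum_a \mu^2(a)\,\frac{|aN_a - 2^{\omega(a)}D|}{a}.
\]

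Expanding $2^{\omega(n-p)} = \sum_{d\mid n-p}\mu^2(d)$ and interchanging summations gives
\[
aN_a - 2^{\omega(a)}D \;=\; \sum_{d\text{ sf}}\mu^2(d)\bigl[a\,\pi(n;[a,d],n) - 2^{\omega(a)}\,\pi(n;d,n)\bigr],
\]
where $\pi(n;q,h) = \#\{p<n:p\equiv h\pmod q\}$. Substituting $\pi(n;q,n) = \pi(n)/\phi(q) + E(n;q,n)$ and writing $d = g\,d_2$ with $g = (a,d)\mid a$ and $(d_2,a) = 1$ (so $[a,d] = a\,d_2$), the main-term part separates as $\pi(n)\bigl(\sum_{d_2\text{ sf},\,(d_2,a)=1}1/\phi(d_2)\bigr)\,\sum_{g\mid a}[a/\phi(a) - 2^{\omega(a)}/\phi(g)]$, and the inner bracket vanishes identically by the classical identity $a/\phi(a) = \sum_{g\mid a}1/\phi(g)$ for squarefree $a$. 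Hence the untruncated $d$-sum formally collapses to the BV-error combination $\sum_d \mu^2(d)[aE(n;[a,d],n) - 2^{\omega(a)}E(n;d,n)]$, reducing the problem to estimating this error sum.

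To extract useful estimates I would truncate the $d$-sum at $T = \sqrt n/(a\log^C n)$, with $C$ depending on $B$ and $k$. For $d \le T$ the moduli $[a,d]$ and $d$ fall within the Bombieri--Vinogradov range $Q \le \sqrt n/\log^C n$, so summing BV over admissible $a$ (and using $2^{\omega(a)}\le 2^k$) shows the truncated part contributes $O_{A,k}(n/\log^A n)$, yielding the $1/\log^B n$ term on the right-hand side. The tail $d > T$ is handled by pairing squarefree divisors $d \leftrightarrow \operatorname{rad}(n-p)/d$, converting the sum into one over small divisors $d' < n/T$ controlled by Brun--Titchmarsh. The truncation also leaves a main-term residue of the form $\pi(n)\,\mathfrak{c}_a\sum_{g\mid a}(\log g)[a/\phi(a) - 2^{\omega(a)}/\phi(g)]$, arising from the asymptotics $\sum_{d_2\le X,(d_2,a)=1,\text{sf}}1/\phi(d_2) \sim \mathfrak{c}_a\log X$; using $\log g \le \log R$, after dividing by $\phi(n)$ and averaging with weight $\mu^2(a)/a$ this produces the $\log R/\log n$ factor on the right-hand side.

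The main technical obstacle is showing that the tail (bounded via the pairing and Brun--Titchmarsh) and the truncation residue combine cleanly to give exactly the claimed error $((\log\log n)^3+\log R)/\log n$ rather than something larger. In particular, one must exploit the signed cancellation in the tail sum and not only absolute-value estimates, because pointwise Brun--Titchmarsh bounds for large moduli are, a priori, too weak. The restriction $R \le n^{1/100}$ is essential: it keeps $aT \le \sqrt n/\log^C n$ within the Bombieri--Vinogradov level for every admissible $a$, and this is what controls the explicit $\log R$-dependence on the right-hand side of \eqref{eq:primes}.
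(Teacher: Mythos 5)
Your opening moves match the paper: reduce to bounding $|aA_a(n)-2^{\omega(a)}\sum_{p<n}2^{\omega(n-p)}|$ using \eqref{eq:elliott}, expand $2^{\omega(n-p)}=\sum_{d\mid n-p}\mu^{2}(d)$, and observe the formal main-term cancellation (the identity $a/\phi(a)=\sum_{g\mid a}1/\phi(g)$). But the plan for the tail $d>T$ has a genuine gap that makes the argument fail as described.

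The tail $d>T$ is \emph{not} an error term that one can afford to bound by an inequality. For a typical $m=n-p$, roughly half the squarefree divisors exceed $T\approx\sqrt{n}$, so $\sum_{d>T}\mu^{2}(d)\,a\,\pi(n-1;[a,d],n)$ is of the same order as the full sum $aA_a(n)\asymp\phi(n)2^{\omega(a)}$; it carries a main-term contribution that must be extracted, not merely bounded. Moreover, even if you only wanted an upper bound, the pairing $d\leftrightarrow\operatorname{rad}(n-p)/d$ transfers you to $d'<\operatorname{rad}(n-p)/T\le n/T$, a range of logarithmic length $\log(n/T)\asymp\tfrac12\log n$, so Brun--Titchmarsh plus $\sum_{d'<n/T}1/\phi([a,d'])\ll (\tau(a)/\phi(a))\log(n/T)$ returns something of size $n\log\log n$, which is off from the target $\frac{\phi(n)}{a}\big((\log\log n)^{3}+\log R\big)/\log n$ by essentially a full factor of $\log n$. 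You flag this as ``the main technical obstacle'' but the proposed fix, exploiting signed cancellation, is left unspecified, and it is precisely the substance of the proof. A secondary problem: when you pair to $d'=\operatorname{rad}(m)/d$ the cutoff $d'<\operatorname{rad}(m)/T$ depends on $m$, and replacing it by $d'<n/T$ is only an over-count, so the exact equality needed to track a signed main term is lost.

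The paper avoids both problems with a finer, asymmetric decomposition of $m=ds$. It introduces two independent truncation parameters, $D=\sqrt{n}/(R^{2}\exp((\log\log n)^{2}))$ and $V=\log n$, and splits into four pieces: (i) $d\le D$, where Bombieri--Vinogradov applies directly; (ii) the \emph{narrow} middle range $D<s<m/D$, of logarithmic length $\asymp\log R+(\log\log n)^{2}$, where Brun--Titchmarsh now \emph{does} give the right size precisely because the range is short; and (iii)--(iv) $s\le D$, where one does \emph{not} pair to $\operatorname{rad}(m)/d$ but instead detects squarefreeness of $d=m/s$ via $\mu^{2}(d)=\sum_{e^{2}\mid d}\mu(e)$, giving a sum over moduli $[se^{2},a]$; the sub-piece $e\le V$ has moduli $\le DV^{2}R\le\sqrt{n}/\log^{A}n$, so BV applies and yields the complementary main term (Lemma~\ref{lem:mts}), while $e>V$ is a genuine error handled by \eqref{eq:tail}. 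The $\log R/\log n$ on the right of \eqref{eq:primes} comes from the width of the middle range plus the secondary term $\Lower_a\ll 2^{\omega(a)}\log a$ in Lemma~\ref{lem:mts2}; your intuition that a $\log g\le\log R$ residue should appear is correct in spirit, but it is only one of the two sources of the $\log R$, and the other (the middle range) is absent from your scheme because you used a single symmetric cutoff $T$.
\par
In short: a single cutoff with the pairing $d\leftrightarrow\operatorname{rad}(m)/d$ leaves you with a wide range and an unextracted main term. You need the asymmetric two-parameter truncation (or some equivalent device) to make the middle range short and to isolate the complementary main term.
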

The reduction of Theorem~\ref{thm:e} to Proposition~\ref{prop:titch} is explained in \S\ref{sec:reduc}. The proof of Proposition~\ref{prop:titch} is given in \S\ref{sec:titch} and uses the classical tools utilized in \cite{ElliottH} (the Brun--Titchmarsh and Bombieri--Vinogradov Theorems). Proposition~\ref{prop:titch} has little content when $\log R \asymp \log n$. This is remedied by the following proposition, proved using recent work on primes in arithmetic progressions \cite{Blomer}.
\begin{proposition}\label{prop:titchstronger}
There is an absolute constant $\eta>0$ with the following property. For every $k \ge 1$,	\begin{equation}\label{eq:primesstronger}
		\sum_{\substack{a \le n^{\eta}\\ \omega(a)\le k \\ (a,n)=1} }\mu^2(a)\bigg| \PP( a \textup{ divides } X_n) - \frac{2^{\omega(a)}}{a}+\frac{2\Lower_a}{\phi(a)\log n}\bigg| \ll_k\frac{(\log \log n)^{C_k}}{\log n}
	\end{equation}
where $C_k$ is a positive constant depending only on $k$,
\begin{equation}\label{eq:gdef}
	\Lower_a:= \sum_{d\mid a^{\infty}} g(d)\log d, \qquad g(d):=\prod_{p^k\mid \mid d} (-1)^{k-1}(p-2)(p-1)^{-k}.
\end{equation}
\end{proposition}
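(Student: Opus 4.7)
My plan to prove Proposition~\ref{prop:titchstronger} parallels that of Proposition~\ref{prop:titch} but invokes Blomer's level-of-distribution result \cite{Blomer} to achieve a level $\theta = 1/2+\eta_0$ exceeding the Bombieri--Vinogradov threshold, which is what permits the extraction of a secondary term in $\PP(a \mid X_n)$. Writing $\PP(a \mid X_n) = \Sigma_a/Z_n$ with $\Sigma_a := \sum_{p<n,\,a\mid n-p}2^{\omega(n-p)}$ and $Z_n := \Sigma_1$, I would unfold using $2^{\omega(m)} = \sum_{d\mid m}\mu^2(d)$ and separate each squarefree divisor into its gcd with $a$ and its complement. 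For squarefree $a$ coprime to $n$, this yields
\[
\Sigma_a \;=\; 2^{\omega(a)}\sum_{(f,a)=1}\mu^2(f)\,\pi(n;af,n).
\]

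Next, substitute the asymptotic $\pi(n;q,n) = \operatorname{li}(n)/\phi(q) + E(n,q)$ and split the $f$-sum at $Y := n^\theta/a$. The contribution of the error over $f \le Y$ is bounded via \cite{Blomer} (summed over $a$ as well), while the tail $f > Y$ is handled by a Selberg upper-bound sieve augmented with a Linnik- or Vaughan-type decomposition of the prime indicator into bilinear pieces, which are again amenable to the extended level of distribution. The main contribution reduces to $\operatorname{li}(n)/\phi(a) \cdot F_{an}(Y)$, where $F_q(Y) := \sum_{(f,q)=1,\,f\le Y}\mu^2(f)/\phi(f)$, plus the analogous evaluation for $f > Y$. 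A Dirichlet-series analysis factorizes $\sum_{(f,q)=1}\mu^2(f)/(\phi(f)f^s) = \zeta(s+1) H_q(s)$ with $H_q$ holomorphic at $s = 0$, $H_q(0) = \phi(q)/q$, and logarithmic derivative $H_q'(0)/H_q(0) = \sum_p (\log p)/(p(p-1)) + \sum_{p\mid q}(\log p)/p$. Combining the Perron residue with $\operatorname{li}(n) = n/\log n + n/\log^2 n + O(n/\log^3 n)$ produces a two-term asymptotic for $\Sigma_a$.

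Taking $q = an$ (since the $(f,n)=1$ constraint is effectively forced by the vanishing of $\pi(n;af,n)$ otherwise), forming the ratio $\Sigma_a/Z_n$ cancels the $n$-dependent portions of the secondary term, leaving an $a$-dependent residue that must be identified with $-2\Lower_a/(\phi(a)\log n)$. The algebraic identity
\[
\sum_{k \ge 0} g(p^k)\, x^k \;=\; 1 + \frac{(p-2)\,x}{(p-1)+x}
\]
encodes precisely the Euler-factor contribution at each prime $p \mid a$ that enters the residue, and $\Lower_a = \sum_{d \mid a^\infty} g(d)\log d$ reassembles these into the claimed form via $\log d = \sum_{p^k \|d} k\log p$. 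Summing the pointwise approximation over squarefree $a \le n^\eta$ with $\omega(a) \le k$ then gives the averaged error bound $(\log\log n)^{C_k}/\log n$, with $C_k$ absorbing the multiplicative complexity induced by $\omega(a) \le k$. The principal obstacle is the identification of the secondary term: tracking the contributions from both ranges of $f$ and matching them with the specific combinatorial form of $g$ in \eqref{eq:gdef} requires a careful algebraic manipulation of the Euler factors at primes dividing $a$, and in particular a verification that the $(p-2)$ factor (rather than the naive $(p-1)$) emerges after the cancellation between numerator and denominator.
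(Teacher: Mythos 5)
Your overall skeleton --- unfold $2^{\omega(n-p)}$ into a divisor sum, split the resulting modulus sum at a threshold, invoke the result of \cite{Blomer} to push past level $1/2$, then extract the secondary term $-2\Lower_a/(\phi(a)\log n)$ via an Euler-factor computation --- is pointed in the right direction, and your algebra for $\sum_k g(p^k)x^k$ and for $H_q'(0)/H_q(0)$ is correct. But there is a genuine gap in how you propose to handle the large-modulus range, and it is exactly the piece that cannot be waved away.

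After you write $\Sigma_a = 2^{\omega(a)}\sum_{(f,a)=1}\mu^2(f)\,\pi(n;af,n)$ and split at $Y$, the tail $f>Y$ is not an error term: the main contribution $\mathrm{li}(n)\sum_{Y<f\le n/a}\mu^2(f)/\phi(af)$ is of the same order as the whole of $\Sigma_a$ (it contributes a constant fraction of $\phi(n)2^{\omega(a)}/a$). So you must \emph{evaluate} this range asymptotically, not merely bound it, and for $af$ up to $n$ no level-of-distribution theorem applies pointwise; $\pi(n;af,n)$ for $af$ near $n$ is $0$ or $1$ and $\mathrm{li}(n)/\phi(af)$ is not a valid approximation. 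A Selberg upper-bound sieve only gives one-sided bounds, and a Vaughan/Linnik bilinear decomposition of the prime indicator, even if it could control the fluctuations, does not by itself produce the main term in this range. The paper's treatment avoids the issue entirely by \emph{divisor switching}: it writes $2^{\omega(m)}=\sum_{m=e^2ds}\mu(e)$ (symmetric in $d,s$) and splits into $e>V$, $\{e\le V,\ d\le D\}$ and $\{e\le V,\ s<m/(e^2D)\}$; in the third piece the roles of $d$ and $s$ are swapped so that the modulus $[se^2,a]$ is again small, at the cost of the cutoff $p<n-se^2D$ (which is why $N_2$ carries $\mathrm{Li}(n-se^2D-1)$). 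Your decomposition $\Sigma_a = 2^{\omega(a)}\sum_{(f,a)=1}\mu^2(f)\pi(n;af,n)$ breaks this symmetry by peeling off the factor $a$ first, which is why no natural switch presents itself; the paper instead keeps $a$ inside the lcm and only strips off the $(ae)^\infty$-part of $d$ later, inside the error estimate, where it writes $d=bd'$ with $b\mid(ae)^\infty$ and applies a finer-than-dyadic decomposition so that \cite[Prop.~6.1]{Blomer} can be used with $d'$ running over an $a$- and $b$-independent range. Without the divisor switch (or a substitute that actually evaluates, rather than bounds, the large-modulus range), the proposed argument does not close.
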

Since $\sum_{d \mid a^{\infty}} |g(d)| =2^{\omega(a)-\mathbf{1}_{2 \mid a}}$, we have
\begin{equation}\label{eq:dabnd}
 |\Lower_a| \le \sum_{p\mid a} \sum_{i\ge 1}\log (p^i) \sum_{p^i \mid \mid d \mid a^{\infty}}|g(d)| \le 2^{\omega(a)}\sum_{p\mid a} \sum_{i\ge 1}\log (p^i) |g(p^i)|\ll 2^{\omega(a)}\log a.
\end{equation}
Proposition~\ref{prop:titchstronger} is proved in \S\ref{sec:titchstronger}.
The lower order term in \eqref{eq:primesstronger} leads to the following application. Given an integer $m>1$ we write it as
$m = \prod_{i=1}^{\Omega(m)}p_i(m)$ where $\Omega(m)$ is the number of prime factors of $m$ (multiplicity counted) and $p_1(m)\ge \ldots \ge p_{\Omega(m)}$. We define 
\begin{equation}\label{eq:defV} V(m):= \left(\frac{\log p_i(m)}{\log m}\right)_{i \ge 1}
\end{equation}
where $p_i(m) := 1$ for $i>\Omega(m)$. This vector sums to $1$ and its entries are nonincreasing. Let $\textup{PD}(\theta)$ be the Poisson--Dirichlet distribution with parameter $\theta$ (see Chapter 1 of \cite{BillingsleyBook} for its definition). 
\begin{cor}\label{cor:PD}
If $V(X_n)$ tends in distribution to $\textup{PD}(\theta)$ for some $\theta>0$ then $\theta=2$.
\end{cor}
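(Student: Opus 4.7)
The plan is to show that the size-biased pick from $V(X_n)$ converges in distribution to $\textup{Beta}(1,\theta)$, and then independently compute its limit distribution function using Proposition~\ref{prop:titchstronger}, forcing $\theta=2$. Concretely, define $\Lambda_n$ so that, conditional on $X_n$, it equals $V_i(X_n)$ with probability $V_i(X_n)$; equivalently, for $f\in C_b([0,1])$,
\begin{equation}
\EE[f(\Lambda_n)\mid X_n]=\sum_i V_i(X_n)f(V_i(X_n)).
\end{equation}
On the ordered simplex $\Delta=\{v\in[0,1]^{\NN}:v_1\ge v_2\ge\cdots\ge 0,\ \sum_i v_i=1\}$, Scheff\'e's lemma upgrades coordinate-wise convergence (with preserved total mass) to $\ell^1$ convergence, which combined with uniform continuity of $f$ on $[0,1]$ makes $v\mapsto\sum_i v_i f(v_i)$ bounded and continuous on $\Delta$. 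Since a size-biased pick from $\textup{PD}(\theta)$ is classically distributed as $\textup{Beta}(1,\theta)$ (cf.~\cite{BillingsleyBook}), the hypothesis $V(X_n)\overset{d}{\to}\textup{PD}(\theta)$ yields $\Lambda_n\overset{d}{\to}\textup{Beta}(1,\theta)$, and hence for every $\delta\in(0,1)$,
\begin{equation}
F_n(\delta):=\PP(\Lambda_n\le\delta)\longrightarrow 1-(1-\delta)^\theta.
\end{equation}

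Next I evaluate $F_n(\delta)$ directly for a fixed $\delta\in(0,\eta)$, with $\eta$ as in Proposition~\ref{prop:titchstronger}. From the definition of $V$,
\begin{equation}
F_n(\delta)=\EE\left[\frac{1}{\log X_n}\sum_{\substack{p^k\|X_n\\p\le X_n^{\delta}}}k\log p\right].
\end{equation}
The classical estimate $\sum_{m\le y}2^{\omega(m)}\ll y\log y$ together with the normalisation in \eqref{eq:elliott} yields $\PP(X_n\le n(\log n)^{-2})=O((\log n)^{-1})$, letting me localise to the event on which $\log X_n=\log n+O(\log\log n)$ and substitute $\log X_n\mapsto\log n$ and $X_n^{\delta}\mapsto n^{\delta}$ at total cost $O(\log\log n/\log n)$, the narrow-window error in the latter substitution being Mertens-controlled. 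Prime-power contributions are $O((\log n)^{-2})$, so
\begin{equation}
F_n(\delta)=\frac{1}{\log n}\sum_{\substack{p\le n^\delta\\(p,n)=1}}\log p\cdot\PP(p\mid X_n)+o(1).
\end{equation}
Proposition~\ref{prop:titchstronger} with $k=1$ lets me replace $\PP(p\mid X_n)$ by $2/p-2\Lower_p/((p-1)\log n)$; the $\ell^1$ error, weighted by $\log p\le\delta\log n$, contributes $O(\delta(\log\log n)^{C_1})$ to the inner sum, hence $o(1)$ after dividing by $\log n$. A geometric-series evaluation of \eqref{eq:gdef} gives
\begin{equation}
\Lower_p=\log p\sum_{k\ge 1}k(-1)^{k-1}\frac{p-2}{(p-1)^k}=\frac{(p-2)(p-1)\log p}{p^2},
\end{equation}
and combining $\sum_{p\le x}\log p/p=\log x+O(1)$ with $\sum_{p\le x}(\log p)^2/p=(\log x)^2/2+O(\log x)$ produces $F_n(\delta)\to 2\delta-\delta^2=1-(1-\delta)^2$.

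Comparing the two limits forces $(1-\delta)^\theta=(1-\delta)^2$ for every $\delta\in(0,\eta)$, whence $\theta=2$. The main obstacle is the arithmetic of Step~2: both the closed-form evaluation of $\Lower_p$ and the verification that the $\Lower_p$-correction in Proposition~\ref{prop:titchstronger} contributes precisely the coefficient $-\delta^2$ that distinguishes $\textup{Beta}(1,2)$ from the general $\textup{Beta}(1,\theta)$; the continuity-of-size-biasing step is classical but requires care in choosing the topology on $\Delta$, and the localisation of $X_n$ together with the Mertens cleanup, though routine, must be executed carefully.
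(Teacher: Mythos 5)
Your route via the size-biased pick $\Lambda_n$ is a genuine alternative to the paper's argument, which instead invokes Handa's formula for the first correlation function of $\textup{PD}(\theta)$ and computes $\lim_n \sum_{n^a \le p \le n^b}\sum_{k\ge 1}\mathbb{P}(p^k\mid X_n)$. Both approaches probe the one-point function of the limit process and both exploit the secondary $\Lower_a$-term in Proposition~\ref{prop:titchstronger}; your computation of $\Lower_p = (p-2)(p-1)\log p/p^2$ and the resulting Mertens evaluation $F_n(\delta)\to 2\delta-\delta^2$ are correct, and the size-biased continuity argument on the ordered simplex is sound.

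However, there is a genuine gap: the claim that ``prime-power contributions are $O((\log n)^{-2})$'' is not justified, and cannot be justified from the bounds the paper provides. The prime-power contribution is
\[
\frac{1}{\log n}\sum_{p\le n^{\delta}}\log p\sum_{j\ge 2}\mathbb{P}(p^j\mid X_n),
\]
and the only bound the paper gives for non-squarefree moduli is $\mathbb{P}(d\mid X_n)\ll n^{o(1)}/d$ (dropping the primality condition in $A_d(n)$ in fact yields the sharper $\mathbb{P}(d\mid X_n)\ll 2^{\omega(d)}(\log n)(\log\log n)/d$, because one loses a factor $\log n$ against $\phi(n)$). Plugging this in gives an $O(\log\log n)$ bound for the prime-power contribution, dominated by the \emph{small} primes $p\ll\log^{O(1)}n$, which is nowhere near $o(1)$. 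The paper's Propositions~\ref{prop:titch} and \ref{prop:titchstronger}, and Lemma~\ref{lem:bnds}, all assume $a$ squarefree and so cannot be cited for $\mathbb{P}(p^j\mid X_n)$ with $j\ge 2$; getting $\mathbb{P}(p^j\mid X_n)\ll (\log\log n)^{O(1)}/p^j$ would require a Brun--Titchmarsh argument for non-squarefree moduli that is not in the paper.

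This is exactly the pitfall the paper avoids by working with $\lim_n\mathbb{E}\sum_i\mathbf{1}_{\widetilde V(X_n)_i\in[a,b]}$ for \emph{fixed} $0<a<b<\eta$: restricting to $p\ge n^a$ makes $\sum_{p\ge n^a}\sum_{j\ge 2}\mathbb{P}(p^j\mid X_n)\ll n^{o(1)-a}\to 0$ trivial. Your argument is salvageable by the same device: instead of $F_n(\delta)$ with $\delta$ starting at $0$, compute $\mathbb{P}(a<\Lambda_n\le b)=F_n(b)-F_n(a)$ for $0<a<b<\eta$, which localises to $p\in(n^a,n^b]$. The same Mertens computation then gives $2(b-a)-(b^2-a^2)=\int_a^b 2(1-x)\,dx$, matching $\textup{Beta}(1,2)$ and still determining $\theta$. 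With that modification the proof goes through; as written, the small-prime step does not.
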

Let $\widetilde{X}_n$ be the random variable taking the value $m \in \{1,2,\ldots, n\}$ with probability proportional to $2^{\omega(m)}$. As observed by Elliott \cite{Elliott},
\begin{equation}\label{eq:uprime}	\frac{\omega(\widetilde{X}_n)-2\log \log n}{(2 \log \log n)^{1/2}} \overset{d}{\longrightarrow} N(0,1), \qquad n \to \infty.
\end{equation}
See \S\ref{sec:motivation} for a self-contained proof of \eqref{eq:uprime}. As shown in \cite{EG}, $V(\widetilde{X}_n)$ tends to $\textup{PD}(2)$. We expect Theorem~\ref{thm:e} and Corollary~\ref{cor:PD} to hold because $X_n$ is distributed like $\widetilde{X}_n$, only with support restricted to shifted primes which are believed to behave in most ways like random integers.

The structure of the paper is as follows. In the next short section we show how Corollary~\ref{cor:PD} follows from Proposition~\ref{prop:titchstronger}. Afterwards in \S\ref{sec:reduc} we reduce the conjecture of Elliott we prove, Theorem~\ref{thm:e} to Proposition~\ref{prop:titch}. In \S\ref{sec:titch} we show how Proposition~\ref{prop:titch} follows from the classical Bombieri--Vinogradov and Brun--Titchmarsh Theorems. At this point the proof of Theorem~\ref{thm:e} is complete. In the penultimate \S\ref{sec:titchstronger} we go beyond the Bombieri--Vinogradov range by employing a result in \cite{Blomer}, based on spectral theory of automorphic forms, to prove Proposition~\ref{prop:titchstronger}. As mentioned in the last paragraph, we conclude the paper with a short proof of \eqref{eq:uprime}.

\section{Proof of Corollary~\ref{cor:PD}}
We now show how Corollary~\ref{cor:PD} follows from Proposition~\ref{prop:titchstronger}.  Suppose $V(X_n)\overset{d}{\longrightarrow}\textup{PD}(\theta)$, and in particular $V(X_n)_i\overset{d}{\longrightarrow} \textup{PD}(\theta)_i$ for all $i$. Since $2^{\omega(n-p)}=n^{o(1)}$, we have $\mathbb{P}(X_n \le T) \ll T/n^{1+o(1)}$, so that $\log X_n/\log n$ tends to $1$ in probability. This implies $\widetilde{V}(X_n)_i\overset{d}{\longrightarrow} \textup{PD}(\theta)_i$  for $\widetilde{V}(X_n)_i:=  \log p_i(X_n)/\log n$. By \cite[Thm.~2.1]{Handa}, 
\[ \lim_{n \to \infty}\EE \sum_{i=1}^{\infty} \mathbf{1}_{\widetilde{V}(X_n)_i\in [a,b]} =\theta\int_{a}^{b} \frac{(1-t)^{\theta-1}}{t} dt\]
for all $0<a<b<1$ (the sum appears infinite but can be truncated at $i \le\lfloor 1/a\rfloor$ since $\sum_{i\ge 1} \widetilde{V}(X_n)_i \le 1$). This is simplified as
\begin{equation}\label{eq:corr}
\lim_{n \to \infty}\sum_{p \in [n^a,n^b]}\sum_{k \ge 1} \mathbb{P}(p^k \text{ divides } X_n)  =\theta\int_{a}^{b} \frac{(1-t)^{\theta-1}}{t}dt.
\end{equation}
Since $\mathbb{P}(d \text{ divides }X_n)\ll n^{o(1)}/d$, \eqref{eq:corr} is equivalent to \begin{equation}\label{eq:corr2}
\lim_{n \to \infty}\sum_{n^a\le p \le n^b} \mathbb{P}(p  \text{ divides }  X_n)  =\theta\int_{a}^{b} \frac{(1-t)^{\theta-1}}{t}dt.
\end{equation}
By Proposition~\ref{prop:titchstronger} with $k=1$ we have
\begin{equation}
\lim_{n \to \infty}\sum_{n^a\le p \le n^b} \mathbb{P}(p  \text{ divides }  X_n)  = 2\int_{a}^{b} \frac{1-t}{t}dt
\end{equation}
by Mertens' theorem \cite[Thm.~2.7(d)]{MV}, as long as $b<\eta$. This is consistent with \eqref{eq:corr2} only if $\theta=2$.

\begin{rem}
More generally, let $k \ge 1$ and a continuous function $f \colon \RR^k \to \CC$ such that the (closure of the) support of $f$ is $\subseteq \{ (x_1,\ldots,x_k): x_1,\ldots,x_k >0, \, x_1+\ldots+x_k < \eta\}$. Then with additional work one can compute 
\[ \lim_{n \to \infty} \EE \sum_{j_1,\ldots,j_k \text{ distinct}} f( V(X_n)_{j_1},\ldots,V(X_n)_{j_k})\]
using Proposition~\ref{prop:titchstronger} and show that it coincides with 
\[ \EE \sum_{j_1,\ldots,j_k \text{ distinct}} f( L_{j_1},\ldots,L_{j_k})\]
where $(L_1,L_2,\ldots)$ is distributed according to $\textup{PD}(2)$. In the language of Bharadwaj and Rodgers \cite{bharadwaj2024large}, this means we can compute the correlation functions of large prime factors of $X_n$ and show they tend to those of the Poisson--Dirichlet
process with parameter $\theta=2$, at least for test functions with restricted support.
\end{rem}

\section{Reduction}\label{sec:reduc}
We now reduce Theorem~\ref{thm:e} to Proposition~\ref{prop:titch}. In other words, we make precise the well-known principle (see for example \cite{GS}) that Erd\H{o}s--Kac type results can be deduced from level of distribution information.
 
Let 
\begin{equation}\label{eq:Ck}
C_k := \EE X^k,\qquad 
    C'_k := \EE |X|^k,
\end{equation}
 where $X\sim N(0,1)$. It is known that for \eqref{eq:prob} to hold it suffices to show that
\begin{equation}\label{eq:mom}
\EE\left( \frac{\omega(X_n)-2 \log \log n}{(2\log \log n)^{1/2}}\right)^k \to C_k, \qquad n \to \infty,
\end{equation}
holds for every integer $k \ge 1$ \cite[Ch.~3.3.5]{Durrett}. To approach \eqref{eq:mom} we use
\begin{proposition}\cite{GS}\label{prop:gs}
Let $f \colon\NN \to \RR_{\ge 0}$ be an arithmetic function and  $\mathcal{P}$ be a squarefree integer. Let $h \colon\NN \to \RR_{\ge 0}$ be a multiplicative function with $h(p) \le p$ for $p | \mathcal{P}$. For $a | \mathcal{P}  $ define $r_a$ via
\[ \sum_{m \le n: \, a \mid m}f(m) =\frac{h(a)}{a}\sum_{m \le n}f(m) + r_a.\]
Let
\[ \mu_{\mathcal{P}} = \sum_{p \mid \mathcal{P}} \frac{h(p)}{p}, \qquad \sigma^2_{\mathcal{P}} =  \sum_{p \mid \mathcal{P}} \frac{h(p)}{p}\left(1-\frac{h(p)}{p}\right).\]
Uniformly for $1 \le k \le \sigma_{\mathcal{P}}^{2/3}$ we have
\begin{equation}
\sum_{m=1}^{n} f(m) \big(\sum_{p \mid \mathcal{P}} \mathbf{1}_{p \mid m} -\mu_{\mathcal{P}}\big)^k =\sigma_{\mathcal{P}}^k \sum_{m=1}^{n} f(m) \big( C_k  + O( (k^{3/2}\sigma_{\mathcal{P}}^{-1})^{1+\mathbf{1}_{2 \mid k}}C'_k)\big)+ O\bigg(\mu_{\mathcal{P}}^k  \sum_{\substack{\omega(a) \le k\\ a \mid \mathcal{P} }} |r_a|\bigg),
\end{equation}
where $C_k$, $C'_k$ are given by \eqref{eq:Ck}. 
\end{proposition}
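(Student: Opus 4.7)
The strategy is to expand $(\omega_{\mathcal{P}}(m)-\mu_{\mathcal{P}})^k$ as a linear combination of indicators $\mathbf{1}_{a\mid m}$ over squarefree $a\mid\mathcal{P}$ with $\omega(a)\le k$, substitute the hypothesis on each divisor sum, and identify the resulting main term with the $k$-th central moment of a sum of independent Bernoulli random variables, which is then evaluated by standard techniques.

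Writing $\omega_{\mathcal{P}}(m)=\sum_{p\mid\mathcal{P}}\mathbf{1}_{p\mid m}$ and expanding $\omega_{\mathcal{P}}(m)^j$ over tuples $(p_1,\ldots,p_j)$, then grouping by the set of distinct coordinates, one obtains
\[\omega_{\mathcal{P}}(m)^j=\sum_{\substack{a\mid\mathcal{P}\textup{ squarefree}\\ \omega(a)\le j}}\omega(a)!\,S(j,\omega(a))\,\mathbf{1}_{a\mid m},\]
where $S(\cdot,\cdot)$ is the Stirling number of the second kind. Combining with the binomial expansion of $(\omega_{\mathcal{P}}(m)-\mu_{\mathcal{P}})^k$ yields
\[(\omega_{\mathcal{P}}(m)-\mu_{\mathcal{P}})^k=\sum_{\substack{a\mid\mathcal{P}\textup{ squarefree}\\\omega(a)\le k}}\alpha_a\,\mathbf{1}_{a\mid m},\quad \alpha_a=\sum_{j=\omega(a)}^k\binom{k}{j}(-\mu_{\mathcal{P}})^{k-j}\omega(a)!\,S(j,\omega(a)).\]
Inserting the hypothesis $\sum_{m\le n,\,a\mid m}f(m)=(h(a)/a)F+r_a$ with $F=\sum_m f(m)$, the left-hand side of the proposition becomes $F\sum_a\alpha_a h(a)/a+\sum_a\alpha_a r_a$.

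Now introduce independent Bernoulli random variables $(B_p)_{p\mid\mathcal{P}}$ with $\EE B_p=h(p)/p$ and set $Y=\sum_p B_p$. Since $\EE\prod_{p\mid a}B_p=h(a)/a$ for squarefree $a$, the same identity gives $\sum_a\alpha_a h(a)/a=\EE(Y-\mu_{\mathcal{P}})^k$. Here $Y$ has mean $\mu_{\mathcal{P}}$, variance $\sigma_{\mathcal{P}}^2$ and is a sum of independent $[0,1]$-valued variables; writing $Z_p=B_p-\EE B_p$ and expanding $\EE(\sum_p Z_p)^k$, only tuples in which every prime appears at least twice contribute. Perfect matchings (each distinct prime occurring exactly twice) exist only for even $k$ and, by Wick-type counting, contribute $(k-1)!!\sigma_{\mathcal{P}}^k=C_k\sigma_{\mathcal{P}}^k$; all other configurations involve a prime with multiplicity $\ge 3$, which costs extra factors of $\sigma_{\mathcal{P}}^{-1}$ once one uses $|Z_p|\le 1$ to dominate higher moments by the variance. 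A careful count of near-matching configurations yields
\[\EE(Y-\mu_{\mathcal{P}})^k=\sigma_{\mathcal{P}}^k\big(C_k+O((k^{3/2}\sigma_{\mathcal{P}}^{-1})^{1+\mathbf{1}_{2\mid k}}C'_k)\big),\]
the parity exponent reflecting that for odd $k$ the leading term already costs one factor $\sigma_{\mathcal{P}}^{-1}$, while for even $k$ the first correction requires \emph{two} extra occurrences.

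Finally, to bound the $r_a$-error one checks $|\alpha_a|\ll\mu_{\mathcal{P}}^k$ uniformly: using $\omega(a)!\,S(j,\omega(a))\le \omega(a)^j\le k^j$ gives $|\alpha_a|\le(\mu_{\mathcal{P}}+k)^k$, and the hypothesis $k\le\sigma_{\mathcal{P}}^{2/3}$ combined with $\sigma_{\mathcal{P}}^2\le\mu_{\mathcal{P}}$ yields $(1+k/\mu_{\mathcal{P}})^k\le\exp(k^2/\mu_{\mathcal{P}})=O(1)$. Summing $|\alpha_a||r_a|$ over squarefree $a\mid\mathcal{P}$ with $\omega(a)\le k$ delivers the stated error term. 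The principal difficulty is the central-moment estimate for $Y$: obtaining the sharp parity-dependent error $(k^{3/2}\sigma_{\mathcal{P}}^{-1})^{1+\mathbf{1}_{2\mid k}}$ rather than a weaker Rosenthal-type bound requires a careful combinatorial analysis that distinguishes perfect matchings from configurations with exactly one, or at least two, additional occurrences.
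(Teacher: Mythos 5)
The paper gives no proof of this proposition; it simply cites \cite{GS} (Prop.~3, which treats $f\equiv 1$) and notes that the argument extends verbatim to general $f\ge 0$, and your reconstruction — expand $(\omega_{\mathcal{P}}(m)-\mu_{\mathcal{P}})^k$ into divisor indicators $\mathbf{1}_{a\mid m}$, substitute the sieve hypothesis, identify the main term as the $k$-th central moment of a sum of independent Bernoullis, and evaluate that moment by matching/Wick combinatorics — is precisely the Granville--Soundararajan argument, with only cosmetic differences (your Stirling-number bookkeeping and the explicit probabilistic reframing of the main term). Your verification that $|\alpha_a|\le(\mu_{\mathcal{P}}+k)^k\ll\mu_{\mathcal{P}}^k$ under $k\le\sigma_{\mathcal{P}}^{2/3}$ together with $\sigma_{\mathcal{P}}^2\le\mu_{\mathcal{P}}$ is correct and is what produces the stated error coefficient.
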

For $f \equiv 1$ Proposition~\ref{prop:gs} is \cite[Prop.~3]{GS} and the proof works the same way for general $f \ge 0$. We derive from Proposition~\ref{prop:gs} the following criterion, to be proved below.
\begin{cor}\label{cor:gs}
For every sufficiently large $n$ let $Y_{n}$ be a random variable taking values in $\{1,2,\ldots,n\}$ and set
\[ I_n:=(\exp(\exp(\log^{1/3}\log  n)), n^{\exp(-\log^{1/3}\log n)}), \quad T:=n^{1/\log^{1/3} \log n}, \quad  \mathcal{P} := \prod_{p \in I_n}p.\]
Fix $k \ge 1$. Suppose that
\begin{equation}
\label{eq:cond2}
\sum_{\substack{\omega(a) \le k\\ a\mid \prod_{p < T}p / \mathcal{P}}} \PP( a \textup{ divides } Y_n) =o\big( (\log \log n)^{k/2}\big)
\end{equation}
as $n \to \infty$, and that for some multiplicative function $g\colon \mathbb{N} \to [0,1]$,
\begin{equation}
\label{eq:cond1} \sum_{\substack{\omega(a) \le k\\a \mid \mathcal{P}}}\bigg| \PP(a \textup{ divides }Y_{n})-g(a)\bigg| =o\big( (\log \log n)^{-k/2}\big)
\end{equation}
as $n \to \infty$. If $\sum_{p \le x} g(p)= \theta \log \log x +O(1)$ for some $\theta>0$ and $\sum_{p \le x} g^2(p)=O(1)$ then
\begin{equation}\label{eq:kthmom}
\EE \bigg( \frac{\omega(Y_{n})-\theta \log \log n}{(\theta \log \log n)^{1/2}}\bigg)^k \to C_k
\end{equation}
as $n \to \infty$.
\end{cor}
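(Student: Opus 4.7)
The plan is to apply Proposition~\ref{prop:gs} with $f(m) = \PP(Y_n = m)$ and with the multiplicative function $h$ given by $h(p) = pg(p)$, which is admissible since $g(p)\in[0,1]$. For squarefree $a\mid \mathcal{P}$ we then have $\sum_{m\le n,\, a\mid m}f(m) = \PP(a\mid Y_n)$ and $(h(a)/a)\sum_m f(m) = g(a)$, so the remainder $r_a$ in Proposition~\ref{prop:gs} is exactly $\PP(a\mid Y_n)-g(a)$, and hypothesis \eqref{eq:cond1} controls $\sum_{\omega(a)\le k,\, a\mid\mathcal{P}}|r_a|$.

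From $\sum_{p\le x}g(p)=\theta\log\log x + O(1)$ and the shape of $I_n$ one computes $\mu_{\mathcal{P}} = \theta\log\log n + O((\log\log n)^{1/3})$, and the hypothesis on $\sum g^2(p)$ gives $\sigma_{\mathcal{P}}^2 = \mu_{\mathcal{P}} + O(1) \sim \theta\log\log n$. Since hypothesis \eqref{eq:cond1} for $k$ implies the analogous statement for every $m \le k$ (because $\{\omega(a)\le m\}\subseteq\{\omega(a)\le k\}$ and $(\log\log n)^{-k/2} = o((\log\log n)^{-m/2})$), applying Proposition~\ref{prop:gs} at each moment $m\in[0,k]$ yields
\[ \EE(\omega_{\mathcal{P}}(Y_n) - \mu_{\mathcal{P}})^m = \sigma_{\mathcal{P}}^m C_m (1+o(1)), \]
where $\omega_{\mathcal{P}}(m'):=\sum_{p\mid \mathcal{P}}\mathbf{1}_{p\mid m'}$ and $C_m=0$ for odd $m$. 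Interpolating between consecutive even moments via Cauchy--Schwarz then shows $\EE|A|^m = O(\sigma_{\mathcal{P}}^m)$ for every $m\le k-1$, where $A:=\omega_{\mathcal{P}}(Y_n)-\mu_{\mathcal{P}}$.

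To pass from $\omega_{\mathcal{P}}(Y_n)$ to $\omega(Y_n)$, write $R:=\omega(Y_n)-\omega_{\mathcal{P}}(Y_n) = \omega_M(Y_n) + \omega_{\ge T}(Y_n)$ with $M:=\prod_{p<T}p/\mathcal{P}$. Expanding $\omega_M(Y_n)^k$ as a sum over $k$-tuples of prime divisors of $M$ and grouping by the underlying squarefree product gives $\EE\omega_M(Y_n)^k\ll_k \sum_{\omega(a)\le k,\, a\mid M}\mu^2(a)\PP(a\mid Y_n)$, which is $o((\log\log n)^{k/2})$ by hypothesis \eqref{eq:cond2}. The tail is bounded deterministically: $Y_n\le n$ forces $\omega_{\ge T}(Y_n)\le \log n/\log T = (\log\log n)^{1/3}$. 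Hence $\EE R^k = o((\log\log n)^{k/2}) = o(\sigma_{\mathcal{P}}^k)$.

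Finally, decompose $\omega(Y_n)-\theta\log\log n = A + E$ with $E:= (\mu_{\mathcal{P}}-\theta\log\log n) + R$, binomially expand $(A+E)^k$, and bound each cross term $\EE A^{k-j}E^j$ ($j\ge 1$) via H\"older's inequality. The deterministic shift contributes $O((\log\log n)^{j/3})$, the factor $\EE|A|^{k-j} = O(\sigma_{\mathcal{P}}^{k-j})$ is in range because $k-j\le k-1$, and $\EE R^i\le \EE R^k = o(\sigma_{\mathcal{P}}^k)$ (using $R\ge 0$ integer and $i\le k$) handles any $R^i$ with $i\ge 1$. Each cross term is therefore $o(\sigma_{\mathcal{P}}^k)$, the $j=0$ term supplies $\sigma_{\mathcal{P}}^k C_k(1+o(1))$, and dividing by $(\theta\log\log n)^{k/2}$ gives \eqref{eq:kthmom}. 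The main obstacle is this cross-term bookkeeping: H\"older exponents must be chosen so that only moments of $A$ of order $\le k-1$ and moments of $R$ of order $\le k$ are invoked, since higher-order moments are not accessible from the hypotheses \eqref{eq:cond1}--\eqref{eq:cond2} at level $k$.
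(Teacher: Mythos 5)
Your proposal follows the paper's proof essentially step for step: apply Proposition~\ref{prop:gs} with $f(m)=\PP(Y_n=m)$ and $h(p)=pg(p)$; compute $\mu_{\mathcal{P}},\sigma_{\mathcal{P}}^2 = \theta\log\log n + O((\log\log n)^{1/3})$; decompose $\omega(Y_n)=\omega_{\mathcal{P}}(Y_n)+\omega_M(Y_n)+\omega_{\ge T}(Y_n)$ with $M=\prod_{p<T}p/\mathcal{P}$; control $\EE\omega_M(Y_n)^k$ by \eqref{eq:cond2} and $\omega_{\ge T}(Y_n)$ deterministically by $\log n/\log T=(\log\log n)^{1/3}$; and recombine via H\"older. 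Your additive split $\omega(Y_n)-\theta\log\log n=A+E$ is the paper's $A_1A_2+A_3+A_4+A_5$ up to trivial rearrangement, so the two proofs are the same in substance.

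Where your write-up goes wrong is the final bookkeeping, which you single out as ``the main obstacle''. You assert that one can bound each cross term $\EE A^{k-j}E^j$ ($1\le j\le k$) by choosing H\"older exponents that invoke only moments of $A$ of order $\le k-1$ and moments of $R$ of order $\le k$. That choice does not exist. After expanding $E^j$, the $i=j$ contribution is $\EE A^{k-j}R^j$, and H\"older with exponents $p,q$ requires $1/p+1/q\le1$ while $(k-j)p\le k-1$ and $jq\le k$ force $1/p+1/q\ge (k-j)/(k-1)+j/k$, which strictly exceeds $1$ whenever $1\le j\le k-1$. So for every $j$ in the interior range the constraints are inconsistent. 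For \emph{even} $k$ this is immediately repaired: $\EE|A|^k=\EE A^k=\sigma_{\mathcal{P}}^k(C_k+o(1))$ is directly available, and then the standard exponents $p=k/(k-j)$, $q=k/j$ close the argument. For \emph{odd} $k$, however, neither the hypotheses at level $k$ nor the resulting moment bounds give you $\EE|A|^k$ (Cauchy--Schwarz would require $\EE A^{k+1}$, which needs \eqref{eq:cond1} at level $k+1$). So as written the argument does not establish the corollary for odd $k$ from the level-$k$ hypotheses alone. The cleanest fix is to run the argument assuming \eqref{eq:cond1}--\eqref{eq:cond2} at level $k+1$ (or $2k$), which gives $\EE|A|^{k}=O(\sigma_{\mathcal{P}}^{k})$ and makes every H\"older step trivial; this costs nothing in the paper's application, since \eqref{eq:cond2again}--\eqref{eq:cond1again} are verified there for every $k$. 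You should state the needed moment of $|A|$ explicitly and acknowledge where it comes from, rather than assert that exponents staying below $k-1$ suffice.

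A smaller point: the claim that $(\mu_{\mathcal{P}}-\theta\log\log n)=O((\log\log n)^{1/3})$ is what makes those deterministic shift contributions acceptable, and you should also record that $\EE\omega_M(Y_n)^k\ll_k\sum_{\omega(a)\le k,\,a\mid M}\mu^2(a)\PP(a\mid Y_n)$ comes from grouping $k$-tuples of primes by their squarefree product (the combinatorial constant is $O_k(1)$); you assert it but a reader will want that step spelled out.
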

\begin{proof}
We consider a given $n$ and apply Proposition~\ref{prop:gs} with $f(m) =\PP(Y_n=m)$, $h(m)=g(m)m$. Define
\begin{equation}
\omega_{\mathcal{P}}(n) = \sum_{p \mid \mathcal{P}} \mathbf{1}_{p \mid n},\quad \mu = \sum_{p \mid \mathcal{P}} g(p),\quad \sigma^2 = \sum_{p \mid \mathcal{P}} g(p)(1-g(p)), \quad R_a=\PP(a \textup{ divides }Y_{n})-g(a).
\end{equation}
We find that 
\begin{equation}\label{eq:app}
	\sum_{m=1}^{n} \PP(Y_n=m)\left(\frac{\omega_{\mathcal{P}}(m)-\mu}{\sigma}\right)^k =  \sum_{m=1}^{n} \PP(Y_n=m)\big( C_k + O( \sigma^{-1-\mathbf{1}_{2 \mid k}})\big)+  O\bigg(  \frac{\mu^k}{\sigma^k} \sum_{\substack{\omega(a) \le k\\ a \mid \mathcal{P}}} |R_a|\bigg)
\end{equation}
holds for every fixed $k \ge 1$ (implied constants may depend on $k$). We claim 
\begin{equation}\label{eq:omegamusigma}
\mu, \sigma^2 = \theta \log \log n+O(\log^{1/3}\log n).
\end{equation}
Indeed, by our assumptions on $\sum_{p \le x}g(p)$ and $\sum_{p \le x}g^2(p)$,
\[0\le \mu-\sigma^2 = \sum_{p \mid \mathcal{P}} g^2(p) \le \sum_{p \le n} g^2(p)=O(1)\]
and 
\begin{align}
\mu &= \sum_{p< n^{\exp(-\log^{1/3}\log n)}} g(p) - \sum_{p\le \exp(\exp(\log^{1/3}\log n))}g(p) \\
&= \theta \log \log (n^{\exp(-\log^{1/3}\log n)}) +O(1)-\theta \log\log (\exp(\exp(\log^{1/3}\log n))) + O(1) \\
&= \theta \log \log n + O(\log^{1/3}\log n).
\end{align}
We express \eqref{eq:app} as
\begin{align}\label{eq:diff} \EE\left(\frac{\omega_{\mathcal{P}}(Y_n)-\mu}{\sigma}\right)^k -C_k \ll  (\log \log n)^{-1/2} + (\log \log n)^{k/2}  \sum_{\substack{\omega(a) \le k\\ a \mid \mathcal{P}}} |R_a|.
\end{align}
By \eqref{eq:cond1}, the right-hand side of \eqref{eq:diff} goes to $0$ as $n \to \infty$. We have the identity
\[ \frac{\omega(Y_n)-\theta\log \log n}{(\theta \log \log n)^{1/2}} = A_1 A_2 + A_3 + A_4 + A_5\]
for
\begin{align}
A_1 &= \frac{\sigma}{(\theta \log \log n)^{1/2}}, \qquad  A_2 = \frac{\omega_{\mathcal{P}}(Y_n)-\mu}{\sigma},\qquad A_3=\frac{\mu-\theta\log \log n}{(\theta\log \log n)^{1/2}},\\
A_4 &= \frac{\sum_{p \in [2,T)\setminus I_n} \mathbf{1}_{p \mid Y_n}}{(\theta\log \log n)^{1/2}},\qquad A_5 = \frac{\sum_{p\in [T,n]}\mathbf{1}_{p \mid Y_n}}{(\theta\log \log n)^{1/2}}
\end{align}
for $T$ and $I_n$ as in the statement of the corollary. By \eqref{eq:omegamusigma}, the (constant) random variables $A_1$ and $A_3$ tend to $1$ and $0$, respectively. We have observed  $\EE A_2^k \to C_k$ by \eqref{eq:diff}. An integer $m$ can have at most $\log m/\log T$ prime factors of size at least $T$, implying $A_5 \ll (\log \log n)^{-1/6}$ and so $\EE A_5^k \to 0$ as $n \to \infty$. To handle $A_4$ we use \eqref{eq:cond2} which yields
\begin{equation}
\EE A_4^k = \frac{ \sum_{p_1,\ldots,p_k \in [2,T) \setminus I_n} \PP([p_1,\ldots,p_k]\textup{ divides } Y_n)}{(\theta\log \log n)^{k/2}} \ll \frac{\sum_{\substack{\omega(a) \le k\\ a \mid \prod_{p < T} p/ \mathcal{P}}} \PP(a \text{ divides } Y_n)}{(\log \log n)^{k/2}} =o(1).
\end{equation}
We have shown $\EE |A_3+A_4+A_5|^k \to 0$ and $\EE (A_1A_2)^k \to C_k$. By Hölder, \eqref{eq:kthmom} follows.
\end{proof}
Corollary~\ref{cor:gs} may also be proved by generalizing an argument of Billingsley \cite{Billingsley}.
To deduce Theorem~\ref{thm:e} from Corollary~\ref{cor:gs} we apply it with $Y_n=X_n$, $g(m)=2^{\omega(m)}/m$ and $\theta=2$. This reduces matters to showing that
\begin{equation}
\label{eq:cond2again}
\sum_{\substack{\omega(a) \le k\\ a\mid \prod_{p < T}p / \mathcal{P}}} \PP( a \textup{ divides } X_n) =o\big( (\log \log n)^{k/2}\big)
\end{equation}
and
\begin{equation}
\label{eq:cond1again} \sum_{\substack{\omega(a) \le k\\a \mid \mathcal{P}}}\bigg| \PP(a \textup{ divides }X_{n})-\frac{2^{\omega(a)}}{a}\bigg| =o\big( (\log \log n)^{-k/2}\big)
\end{equation}
hold as $n \to \infty$, for every $k \ge 1$, where $T$ and $\mathcal{P}$ are defined in Corollary~\ref{cor:gs}. We explain why Proposition~\ref{prop:titch} implies \eqref{eq:cond2again}--\eqref{eq:cond1again}. Applying Proposition~\ref{prop:titch} with  $S=\{ a : a\text{ divides }\prod_{p < T} p/ \mathcal{P}\}$, $B=1$ and $R = T^k$ we obtain
\begin{equation}\label{eq:primesagain} 
		\sum_{\substack{\omega(a) \le k \\ a \mid \prod_{p < T} p/ \mathcal{P}\\ (a,n)=1}}\bigg| \PP( a \textup{ divides } X_n) - \frac{2^{\omega(a)}}{a}\bigg| \ll_{k}\frac{\log R}{\log n}	\sum_{\substack{\omega(a)\le k\\ a \mid \prod_{p < T} p/ \mathcal{P} }}\frac{1}{a} +\frac{1}{\log n}
	\end{equation}
and the triangle inequality yields
\begin{align}\label{eq:primesagainagain} 
		\sum_{\substack{\omega(a) \le k \\ a \mid \prod_{p < T} p/ \mathcal{P}\\ (a,n)=1}} \PP( a \textup{ divides } X_n)  &\ll_{k} 	\sum_{\substack{\omega(a)\le k\\a \mid \prod_{p < T} p/ \mathcal{P}}}\frac{1}{a} +\frac{1}{\log n}\ll_k \big(1+ \sum_{p \mid \prod_{p < T} p /\mathcal{P}} \frac{1}{p}\big)^k \ll_k (\log \log n)^{k/3}
  \end{align}
using Mertens' theorem \cite[Thm.~2.7(d)]{MV}. Since 
\begin{equation}\label{eq:harmless}
\PP(a \textup{ divides }X_n)\ll n^{-1+o(1)}
\end{equation}
holds uniformly for $a$ with $(a,n)>1$,  \eqref{eq:cond2again} is obtained. To obtain \eqref{eq:cond1again} we apply Proposition~\ref{prop:titch} with $S =\{ a:a \text{ divides } \mathcal{P}\}$, $B=1$ and $R=(\max I_n)^k$ (for $I_n$ defined in Corollary~\ref{cor:gs}), obtaining
\begin{equation}\label{eq:primesagain2} 
		\sum_{\substack{\omega(a) \le k \\ a \mid \mathcal{P}\\ (a,n)=1}}\bigg| \PP( a \textup{ divides } X_n) - \frac{2^{\omega(a)}}{a}\bigg| \ll_{k} \frac{\log R}{\log n}	\sum_{\substack{\omega(a) \le k \\ a \mid \mathcal{P}}}\frac{1}{a} +\frac{1}{\log n}.
	\end{equation}
We conclude since $\log R /\log n\ll_k \exp(-\log^{1/3}\log n)$ holds by definition of $R$ and
\[ \sum_{\substack{\omega(a) \le k \\ a \mid \mathcal{P}}}\frac{1}{a} \le \big(1+\sum_{p\mid \mathcal{P}}\frac{1}{p}\big)^k\ll_k (\log \log n)^{k}\]
holds by Mertens' theorem \cite[Thm.~2.7(d)]{MV}. The condition $(a,n)=1$ in \eqref{eq:primesagain2} is harmless by \eqref{eq:harmless} and the fact that $\sum_{p \mid (\mathcal{P},n)}1/p$ goes to $0$ faster than any power of $\log \log n$ by the construction of $\mathcal{P}$.
\section{Proof of Proposition~\ref{prop:titch}}\label{sec:titch}
We have $\PP( a \textup{ divides } X_n) = A_a(n)/A_1(n)$ where \begin{equation}\label{eq:sumsa}
A_a(n):=\sum_{p<n:\, a \mid n-p}2^{\omega(n-p)}.
\end{equation}
Using \eqref{eq:elliott} we see that Proposition~\ref{prop:titch} will follow if we show
\begin{equation}\label{eq:primes2} 
	\sum_{\substack{a \le R,\, a \in S \\ \omega(a)\le k \\ (a,n)=1} }\mu^2(a)\bigg|A_a(n) - \phi(n)\frac{2^{\omega(a)}}{a}\bigg| \ll_{k,B}  \frac{\phi(n)\log R+n(\log \log n)^2}{\log n} \sum_{\substack{a \le R,\, a \in S \\ \omega(a) \le k\\ (a,n)=1}} \frac{\mu^2(a)}{a} + \frac{n}{\log^B n}.
\end{equation}
We write $2^{\omega(n-p)}$ as a divisor sum using $2^{\omega(m)}= \sum_{d\mid m}\mu^2(d)$ and $\mu^2(d) = \sum_{e^2 \mid d} \mu(e)$ and separate into different ranges:
\[2^{\omega(m)} = \sum_{m=ds}\mu^2(d)=\sum_{\substack{m=ds \\ d \le D}}\mu^2(d) +\sum_{\substack{m=ds \\ D<s<m/D}}\mu^2(d)+\sum_{\substack{m=e^2ds\\ s \le D \\ e\le V}} \mu(e)  + \sum_{\substack{m=e^2 ds \\ s \le D\\ e> V}} \mu(e),\]
with the choices
\begin{equation}\label{eq:choices}
V=\log n\qquad \text{and} \qquad D=\frac{\sqrt{n}}{R^2\exp((\log \log n)^2)},
\end{equation}
and $m$ is equal to $n-p$. We split $A_a(n)$ accordingly:
\begin{align}
A_a(n)&=\sum_{\substack{p<n\\ a\mid n-p}}\bigg(\sum_{\substack{n-p=ds\\ d \le D}}\mu^2(d)+\sum_{\substack{n-p=ds\\ D<s <(n-p)/D}}\mu^2(d)+\sum_{\substack{n-p=e^2ds\\ s\leq D\\ e\leq V}}\mu(e)+\sum_{\substack{n-p=e^2ds\\ s \le D\\ e>V}}\mu(e) \bigg)\\
&=:S_0(a)+S_1(a)+S_2(a)+S_3(a).
\end{align}
Writing $\pi(x;q,b)$ for the number of primes in $[1,x]$ that are congruent to $b$ modulo $q$,  we have
\begin{align}
 S_0(a)&=\sum_{d \le D} \mu^2(d) \pi(n-1;[d,a],n), \qquad S_1(a)=\sum_{D<s<(n-2)/D} \sum_{\substack{p<n-Ds\\p\equiv n([a,s])}}\mu^2((n-p)/s),\\
	\label{eq:divswitch2}   S_2(a)&=\sum_{\substack{ s\leq D\\ e\leq V}}\mu(e) \pi(n-1;[se^2,a],n), \qquad S_3(a)=\sum_{\substack{s \le D \\e>V}} \mu(e) \pi(n-1;[se^2,a],n).
\end{align}
Let
\[ \textup{Li}(m) = \int_{2}^{m}\frac{dt}{\log t}\]
be the logarithmic integral. Write $E(m;q,a)=\pi(m;q,a)-\tfrac{\textup{Li}(m) \mathbf{1}_{(q,a)=1}}{\varphi(a)}$ and accordingly $S_i(a)=M_i(a)+E_i(a)$ ($i=0,2$) for
\begin{align}
	M_0(a) &= \textup{Li}(n-1) \sum_{\substack{ d \le D\\(d,n)=1}}\frac{\mu^2(d)}{\phi([d,a])} , \qquad E_0(a) = \sum_{d \le D}\mu^2(d) E(n-1;[d,a],n),\\ 
\label{eq:M2def}	M_2(a) &= \textup{Li}(n-1)\sum_{\substack{s \le D \\ e \le V \\ (se,n)=1}} \frac{\mu(e)}{\phi([se^2,a])},\qquad
	E_2(a) = \sum_{\substack{s\le D\\ e \le V}} \mu(e)E(n-1;[se^2,a],n).
\end{align}
We first observe some basic properties of $\phi$. We have
\begin{equation}\label{eq:lcm}
	\phi(n_1) \phi(n_2) = \phi((n_1,n_2))\phi([n_1,n_2])
\end{equation}
and
\begin{equation}\label{eq:sub}
	\phi(n_1)\phi(n_2) \le \phi(n_1 n_2)
\end{equation}
for every $n_1,n_2 \ge 1$. We have
\begin{equation}\label{eq:log}
	\sum_{m \le A} 1/\phi(m) = C \log A + O(1)
\end{equation}
for $A \ge 1$ \cite[p.~42]{MV}. The estimate
\begin{equation}\label{eq:tail}
	\sum_{n > A} 1/\phi(n^2) \ll 1/A
\end{equation}
holds for each $A>0$, since it is equivalent to $\sum_{A \le n<2A}\tfrac{n^2}{\phi(n^2)}\ll A$ which follows from \cite[Thm.~2.14]{MV}. Moreover, $n\ge \phi(n)\gg n/\log \log n$ \cite[Thm.~2.9]{MV}. The following lemmas estimate the contributions of $S_i$ to the left-hand side of \eqref{eq:primes2} and will be proved below.
\begin{lem}\label{lem:bnds}
Suppose $1 \le R \le n^{1/100}$. Fix $k \ge 1$. Then for squarefree $1 \le a \le R$ with $\omega(a) \le k$ and $(a,n)=1$,
		\begin{align}
|S_1(a)| +|S_3(a)| \ll_k \frac{\phi(n)}{a\log n} (\log R + (\log \log n)^2).
	\end{align}
\end{lem}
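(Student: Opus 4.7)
The plan is to bound $|S_1(a)|$ and $|S_3(a)|$ separately by sums of prime counts in arithmetic progressions and apply the Brun--Titchmarsh theorem. The key observation, responsible for the $\phi(n)$ (not $n$) in the target bound, is that one may restrict to primes $p \nmid n$: for such $p$ one has $(n-p,n)=(p,n)=1$, which forces $(s,n)=1$ (respectively $(se,n)=1$ for $S_3$), while the excluded primes $p \mid n$ contribute only $O(n^{o(1)})$ in total after summing over $s,e$ (there are $\omega(n) \le \log n$ such primes, each with at most $\tau(n-p) = n^{o(1)}$ decompositions). This coprimality enables the extraction of a factor $\phi(n)/n$ from sums of the form $\sum 1/\phi([a,s])$.

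For $S_1(a)$, using $\mu^2((n-p)/s) \le 1$ gives $|S_1(a)| \le \sum_{D<s<(n-2)/D} \pi(n;[a,s],n)$. The choices in \eqref{eq:choices} combined with $R \le n^{1/100}$ ensure $[a,s] \le as \le n^{1-\epsilon}$ for some fixed $\epsilon>0$, so Brun--Titchmarsh yields $\pi(n;[a,s],n) \ll n/(\phi([a,s])\log n)$ whenever $(s,n)=1$. I would then estimate the resulting sum of $1/\phi([a,s])$ by writing $\phi([a,s]) = \phi(a)\phi(s)/\phi((a,s))$, decomposing by $q = (a,s) \mid a$, substituting $s = qt$, and applying
\[ \sum_{t \le Z,\,(t,n)=1} \frac{1}{\phi(t)} \ll \frac{\phi(n)}{n}\log Z, \]
which follows from the identity $m/\phi(m) = \sum_{d \mid m}\mu^2(d)/\phi(d)$ together with $\sum_{(k,n)=1,\,k \le Z}1/k = (\phi(n)/n)\log Z + O(1)$. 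Since $\omega(a) \le k$, the sum over divisors $q \mid a$ contributes only an $O_k(1)$ combinatorial factor, giving
\[ \sum_{\substack{D<s<(n-2)/D \\ (s,n)=1}} \frac{1}{\phi([a,s])} \ll_k \frac{\phi(n)}{na}\log\!\Bigl(\frac{n-2}{D^2}\Bigr). \]
From \eqref{eq:choices}, $\log((n-2)/D^2) = 4\log R + 2(\log\log n)^2 + O(1)$, yielding $|S_1(a)| \ll_k \phi(n)(\log R + (\log\log n)^2)/(a\log n)$.

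For $S_3(a)$, I would bound $|S_3(a)| \le \sum_{s \le D,\,V<e,\,se^2 \le n} \pi(n;[se^2,a],n)$ (after restricting to $(se,n)=1$) and split according to whether $se^2 \le \sqrt{n}$. In the main range $se^2 \le \sqrt{n}$, Brun--Titchmarsh applies with room to spare; combining it with $V = \log n$, the elementary bound $\sum_{e>V} 1/\phi(e^2) \ll 1/V$, and the $\phi(n)/n$ coprimality savings via the same $q=(a,se)$ decomposition, the contribution works out to $\ll_k \phi(n)\log D/(aV\log n) \ll_k \phi(n)/(a\log n)$. In the complementary range $se^2 > \sqrt{n}$, the trivial bound $\pi(n;[se^2,a],n) \le n/[se^2,a] + 1$ suffices: summing $\sum_{s \le D}\sum_{e:\,se^2>\sqrt{n}} (n/(se^2)+1)$ yields a quantity of order $n/(R\exp((\log\log n)^2/2))$, which is negligible against the target thanks to the exponential factor baked into $D$ via \eqref{eq:choices}.

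The main technical obstacle will be the clean extraction of the $\phi(n)/n$ savings from the coprimality condition $(s,n)=1$ (respectively $(se,n)=1$): without it one obtains $n$ in place of $\phi(n)$ in the final bound, losing a factor of $\log\log n$ against the target. The extraction itself proceeds by standard multiplicative decomposition, with the overhead from the sum over divisors of $a$ absorbed into the $O_k(1)$ implied constant.
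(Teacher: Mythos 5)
Your proposal follows essentially the same route as the paper: bound the $(s,n)>1$ (respectively $(se,n)>1$) contribution trivially, apply Brun--Titchmarsh to the coprime part, decompose by $q=(a,s)$ using $\phi([a,s])=\phi(a)\phi(s)/\phi((a,s))$ and the submultiplicativity of $\phi$, and extract the $\phi(n)/n$ saving from the coprimality $(s,n)=1$. The cosmetic differences are that you remove the primes $p\mid n$ upfront rather than bounding $\pi(n-1;q,n)\le\omega(n)\ll\log n$ when $(q,n)>1$ as the paper does, and you split $S_3$ by the size of $se^2$ rather than by the size of $e$; both work.

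One point to patch: the claimed estimate $\sum_{t\le Z,\,(t,n)=1}1/\phi(t)\ll(\phi(n)/n)\log Z$ is false in general (the left side is at least $1$, while the right side can be made arbitrarily small by taking $n$ with many prime factors and $Z$ bounded). What you actually need is the \emph{interval} estimate over $B< t\le A$ with $B,A$ both positive powers of $n$; the paper's \eqref{eq:sophis} gives $\ll(\phi(n)/n)\log(A/B)+2^{\omega(n)}(\log B)/B$, and since $B\gg D/a\gg n^{\Theta(1)}$ the second term is negligible. Since your $s$ (respectively $t=s/q$) does run over such an interval, this is a fixable imprecision rather than a structural flaw, but as written the quoted inequality is not correct.
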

\begin{lem}\label{lem:bv}
Suppose $1 \le R \le n^{1/100}$. Then for every $B>0$,
		 \[\sum_{a \le R} (|E_0(a)|+ |E_2(a)|)  \ll_B \frac{n}{\log^B n}.\]
\end{lem}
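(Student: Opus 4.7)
The plan is to swap the order of summation so that the outer variable is the modulus $q$ of the error term $E(n-1;q,n)$, and then apply the Bombieri--Vinogradov Theorem combined with Cauchy--Schwarz to absorb the divisor-like weights that appear. In $E_0(a)$ the modulus is $q=[d,a]$ with $d\le D$ and $a\le R$, so $q\le DR$; in $E_2(a)$ it is $q=[se^2,a]$ with $s\le D$, $e\le V=\log n$, $a\le R$, so $q\le DV^2R$. Using \eqref{eq:choices}, the hypothesis $R\le n^{1/100}$, and the fact that $\exp((\log\log n)^2)$ eventually dominates every fixed power of $\log n$, both bounds lie below $Q:=\sqrt{n}/\log^A n$ for any fixed $A$ and all sufficiently large $n$. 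This is precisely the role of the $\exp((\log\log n)^2)$ saving in the choice of $D$.

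For a fixed $q\le Q$, the number of pairs $(d,a)$ with $[d,a]=q$ is at most $\tau(q)^2$ (since $d,a\mid q$), and the number of triples $(s,e,a)$ with $[se^2,a]=q$ is at most $\tau(q)^3$: the factors $a$ and $m:=se^2$ each divide $q$, giving $\le\tau(q)$ choices apiece, and for each $m$ there are at most $\tau(m)\le\tau(q)$ factorizations $m=se^2$. Hence, up to separating the $(q,n)=1$ case treated below, the lemma reduces to showing
\begin{equation}
\sum_{q\le Q}\tau(q)^C\max_{(b,q)=1}|E(n-1;q,b)|\ll_B\frac{n}{\log^B n}
\end{equation}
for some absolute constant $C$. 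I would prove this by Cauchy--Schwarz, writing $|E|=|E|^{1/2}\cdot|E|^{1/2}$ to get
\begin{equation}
\sum_{q\le Q}\tau(q)^C\max_b|E|\le\bigg(\sum_{q\le Q}\tau(q)^{2C}\max_b|E|\bigg)^{1/2}\bigg(\sum_{q\le Q}\max_b|E|\bigg)^{1/2}.
\end{equation}
For the first factor, the trivial bound $\max_b|E(n-1;q,b)|\ll n/\phi(q)$ and the standard estimate $\sum_{q\le Q}\tau(q)^{2C}/\phi(q)\ll_C\log^{O_C(1)}n$ give $\ll_C n\log^{O_C(1)}n$. The second factor is $\ll_{B'}n/\log^{B'}n$ by Bombieri--Vinogradov, provided $A=A(B')$ is sufficiently large. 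Taking $B'$ large in terms of $B$ and $C$ yields the desired bound.

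Finally, when $(q,n)>1$ the Bombieri--Vinogradov theorem does not apply to the residue class $b=n$ directly, but any prime $p<n$ with $p\equiv n\pmod q$ must itself divide $(q,n)$, so $\pi(n-1;q,n)\ll\omega((q,n))\ll\log q$; the resulting contribution $\sum_{q\le Q}\tau(q)^C\log q\ll\sqrt{n}\log^{O_C(1)}n$ is negligible. The main obstacle is arranging that the polynomial-in-$\log n$ loss introduced by the $\tau(q)^C$ weight and the Cauchy--Schwarz splitting is compensated by Bombieri--Vinogradov, which is why we need $Q$ to be a genuine super-polylogarithmic factor below $\sqrt n$ — exactly the effect of the $\exp((\log\log n)^2)$ term in the definition of $D$.
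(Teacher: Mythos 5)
Your argument is correct and is essentially the paper's proof: reorder the summation by the modulus $b=[d,a]$ (resp.\ $[se^2,a]$), absorb the multiplicity into a $\tau(b)^2$ (resp.\ $\tau(b)^3$) weight, and split off this weight by Cauchy--Schwarz, bounding one factor by the trivial estimate and the other by Bombieri--Vinogradov, noting that $DR$ and $DV^2R$ fall below $\sqrt{n}/\log^A n$ thanks to the $\exp((\log\log n)^2)$ factor in $D$. The only cosmetic differences are the choice of trivial bound ($n/b$ versus $n/\phi(q)$) and that you make the $(q,n)>1$ case explicit while the paper leaves it implicit in the cited form of Bombieri--Vinogradov.
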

\begin{lem}\label{lem:mts}
For squarefree $1 \le a \le n$ with $\omega(a)\le k$,
\[ M_2(a)- M_0(a) \ll_k \frac{n}{a \log n}.\]
\end{lem}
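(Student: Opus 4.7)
The plan is to bring $M_0$ and $M_2$ into the same shape by Möbius inversion, and then estimate the two residual sums using the explicit formula $\phi([e^2s,a]) = \phi(e^2 s)\phi(a)/\phi((e^2s,a))$ (valid because $a$ is squarefree) together with the sharp tail estimate $\sum_{e>V} 1/(e\phi(e)) \ll 1/V$.

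First I would apply $\mu^2(d) = \sum_{e^2\mid d}\mu(e)$ in $M_0$ and substitute $d = e^2 s$ to obtain
\[ \frac{M_0(a)}{\textup{Li}(n-1)} \;=\; \sum_{\substack{e,s\ge 1\\ e^2 s\le D\\ (es,n)=1}} \frac{\mu(e)}{\phi([e^2 s,a])}. \]
The only difference with $M_2(a)/\textup{Li}(n-1)$ is the summation region, namely $\{e\le V,\, s\le D\}$ versus $\{e^2 s\le D\}$. Since the $e=1$ terms agree, writing out the symmetric difference yields
\[ \frac{M_2(a)-M_0(a)}{\textup{Li}(n-1)} \;=\; \Sigma_1 - \Sigma_2, \]
where $\Sigma_1$ sums $\mu(e)/\phi([e^2 s,a])$ over $\{2\le e\le V,\ D/e^2<s\le D,\ (es,n)=1\}$ and $\Sigma_2$ over $\{e>V,\ s\le D/e^2,\ (es,n)=1\}$.

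Next, for squarefree $a$ I would set $\alpha := (e,a)$ and $\beta := a/\alpha$, so that $(e^2 s,a) = \alpha\cdot (s,\beta)$ with coprime prime supports, hence $\phi((e^2 s,a)) = \phi(\alpha)\phi((s,\beta))$. Combined with $\phi(e^2 s)\ge e\phi(e)\phi(s)$, the summand is bounded by $\phi(\alpha)\phi((s,\beta))/(\phi(a)e\phi(e)\phi(s))$. Splitting the $s$-sum by $\delta=(s,\beta)\mid\beta$ and invoking \eqref{eq:log} together with $d(\beta)\ll_k 1$ gives
\[ \sum_{X<s\le Y}\frac{\phi((s,\beta))}{\phi(s)} \;=\; \sum_{\delta\mid \beta}\sum_{\substack{X/\delta<s'\le Y/\delta\\ (s',\beta)=1}} \frac{1}{\phi(s')} \;\ll_k\; \log(Y/X). \]
After writing $e=\alpha e_0$ with $(e_0,a)=1$ and $\mu^2(e_0)=1$, the factor $\phi(\alpha)/(e\phi(e))$ collapses to $1/(\alpha e_0\phi(e_0))$, reducing $\Sigma_1$ and $\Sigma_2$ to sums over $(\alpha,e_0)$ with a logarithmic weight.

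For $\Sigma_1$ the $s$-sum contributes $O_k(\log e)=O_k(\log\alpha+\log e_0)$, and both $\sum 1/(e_0\phi(e_0))$ and $\sum \log e_0/(e_0\phi(e_0))$ converge absolutely, so the inner sum is $\ll_k \log\alpha+1$; since $\sum_{\alpha\mid a}(\log\alpha+1)/\alpha\ll_k 1$ for $\omega(a)\le k$, and $\phi(a)\gg_k a$, this yields $|\Sigma_1|\ll_k 1/a$. For $\Sigma_2$ the $s$-sum contributes only $O_k(\log(D/e^2))\le O(\log n)$, so the estimate reduces to
\[ |\Sigma_2| \;\ll_k\; \frac{\log n}{\phi(a)}\sum_{\alpha\mid a}\frac{1}{\alpha}\sum_{e_0>V/\alpha}\frac{1}{e_0\phi(e_0)}. \]
The decisive input is the tail bound $\sum_{e>V}1/(e\phi(e))\ll 1/V$, which follows from \eqref{eq:log} by Abel summation: writing $A(x)=\sum_{e\le x}1/\phi(e)=A\log x+B+O(1)$, the terms $-A(V)/V$ and $\int_V^\infty A(t)/t^2\,dt$ each contribute $(A\log V)/V$ with opposite signs, which cancel exactly and leave only $A/V$. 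With $V=\log n$ and $\sum_\alpha 1\ll_k 1$ this gives $|\Sigma_2|\ll_k 1/a$ as well, and therefore $|M_2(a)-M_0(a)|\le \textup{Li}(n-1)(|\Sigma_1|+|\Sigma_2|)\ll_k n/(a\log n)$.

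The main obstacle is the tail estimate powering $\Sigma_2$: a naive substitution $\phi(e)\gg e/\log\log e$ loses a $\log\log V$ factor that, combined with $\log D\asymp\log n$, would produce an extra $\log\log\log n$ in the final bound. The precise Abel-summation cancellation above, enabled by the $O(1)$-error shape of \eqref{eq:log}, is exactly what is needed to match the target $n/(a\log n)$ without any excess logarithmic factors.
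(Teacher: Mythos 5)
Your proof is correct and follows essentially the same route as the paper: you split $M_2(a)-M_0(a)$ into a tail piece over $e>V$ and an overflow piece over $D<se^2\le De^2$, factor $\phi([e^2s,a])$ via \eqref{eq:lcm}--\eqref{eq:sub} with $(e^2s,a)$ split according to which part of $a$ divides $e$ versus $s$, and close using \eqref{eq:log} together with the $1/V$ tail bound. Two minor remarks: your tail bound $\sum_{e>V}1/(e\phi(e))\ll 1/V$ is exactly \eqref{eq:tail} (since $e\phi(e)=\phi(e^2)$), so the Abel-summation digression is not needed; and in the displayed ``equality'' for $\sum_{X<s\le Y}\phi((s,\beta))/\phi(s)$ the condition should be $(s',\beta/\delta)=1$ rather than $(s',\beta)=1$, and the ``$=$'' should be a ``$\le$'' with the coprimality simply dropped, neither of which affects the resulting $\ll_k\log(Y/X)$ bound.
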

\begin{lem}\label{lem:mts2}
Fix $k \ge 1$. Uniformly for $D\ge n^{1/3}$ and squarefree $1\le a\le n^{1/100}$  with $(a,n)=1$ and $\omega(a) \le k$,
	\begin{equation}\label{eq:final}
		\sum_{\substack{ d\le D\\ (d,n)=1}} \frac{\mu^2(d)}{\phi([d,a])} = \frac{2^{\omega(a)}}{a}\left( \frac{\phi(n)}{n} \log D  + O_k((\log \log n)^2 )\right)-\frac{\phi(n)}{n}\frac{\Lower_a}{\phi(a)}.
	\end{equation}
\end{lem}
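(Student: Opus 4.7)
The plan is to decompose $d$ by its gcd with $a$, apply a mean-value theorem for $\mu^2/\phi$ with coprimality, and match the resulting secondary constant with the definition of $\Lower_a$. Starting from $\phi([d,a]) = \phi(d)\phi(a)/\phi((d,a))$ in \eqref{eq:lcm}, I parameterise each squarefree $d$ with $(d,a)=e$ as $d = em$, where $e \mid a$, $m$ is squarefree, and $(m,a) = 1$; since $(a,n)=1$, the condition $(d,n)=1$ is equivalent to $(m,n)=1$. Using $\phi(em) = \phi(e)\phi(m)$ and $\mu^2(em) = \mu^2(m)$, the sum rewrites as $\phi(a)^{-1}\sum_{e \mid a}\sum_{m \le D/e,\,(m,na)=1}\mu^2(m)/\phi(m)$.

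Next, I invoke the standard asymptotic
\[
\sum_{\substack{m \le X\\(m,q)=1}}\frac{\mu^2(m)}{\phi(m)} = \frac{\phi(q)}{q}\Bigl(\log X + \gamma + C_0 + \sum_{p \mid q}\frac{\log p}{p}\Bigr) + O(X^{-\delta}), \qquad C_0 := \sum_p \frac{\log p}{p(p-1)},
\]
for some fixed $\delta > 0$, uniform in $q$. This follows from a Perron-type argument on the Dirichlet series $\zeta(s+1)H_q(s)$ with $H_q(s) = \prod_{p\mid q}(1 - p^{-s-1})\prod_{p\nmid q}(1 - p^{-s-1})(1 + (p-1)^{-1}p^{-s})$; logarithmic differentiation at $s=0$ yields $H_q(0) = \phi(q)/q$ and $(\log H_q)'(0) = C_0 + \sum_{p\mid q}\log p/p$, which gives the stated secondary constant. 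Substituting $q = na$ (so $\phi(na)=\phi(n)\phi(a)$ by coprimality) and summing over $e \mid a$ using $\sum_{e \mid a}\log e = 2^{\omega(a)-1}\log a$, the result can be written as
\[
\frac{2^{\omega(a)}\phi(n)}{na}\log D + \frac{2^{\omega(a)-1}\phi(n)}{na}\sum_{p\mid a}\Bigl(\tfrac{2\log p}{p} - \log p\Bigr) + \frac{2^{\omega(a)}\phi(n)}{na}\Bigl(\gamma + C_0 + \sum_{p\mid n}\tfrac{\log p}{p}\Bigr)
\]
plus an error $O(2^{\omega(a)}\phi(a)^{-1} X^{-\delta})$, which is negligible since $D \ge n^{1/3}$ gives $X \ge n^{1/4}$.

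The first displayed term is exactly the claimed main contribution. The second equals $-(\phi(n)/n)(2^{\omega(a)-1}/a)\sum_{p\mid a}(p-2)\log p/p$; to identify this with $-(\phi(n)/n)\Lower_a/\phi(a)$, I compute $\Lower_a$ from its Euler product by evaluating, for each prime $p$, the geometric sums $\sum_{k\ge 0}g(p^k) = 2(p-1)/p$ and $\sum_{k\ge 1}k g(p^k) = (p-2)(p-1)/p^2$ (the $p=2$ numerators vanish), giving $\Lower_a = 2^{\omega(a)-1}(\phi(a)/a)\sum_{p\mid a}(p-2)\log p/p$ and hence an exact match. The third piece is $O_k(2^{\omega(a)}/a \cdot \log\log n)$ because $\sum_{p\mid n}\log p/p \ll \log\log n$ (the extremal $n$ is a product of all primes up to some $y \asymp \log n$, after which Mertens gives $\log y + O(1)$), which fits within the allowed $O_k((\log\log n)^2)$. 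The main obstacle is establishing the uniform mean-value theorem with its explicit secondary constant; once that is in hand, the remaining work is algebraic, with the key point being the clean cancellation between $\sum_{p\mid a}\log p/p$ (from the secondary constant) and $\sum_{e\mid a}\log e$ (from the outer sum) conspiring to produce exactly $\Lower_a$.
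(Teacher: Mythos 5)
Your proposal is correct in its conclusions and takes a genuinely different route from the paper, so a comparison is warranted.

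The paper observes that $f_{n,a}(d)=\mu^2(d)\phi(a)\mathbf 1_{(d,n)=1}/\phi([d,a])$ satisfies $f_{n,a}=f_{n,1}*g_a$, where $g_a$ is the multiplicative function $g$ restricted to $d\mid a^{\infty}$ (so $d$ ranges over \emph{all} prime-power supported on $a$, not just squarefree divisors). It then evaluates $\sum_{d\le D}f_{n,a}(d)$ by feeding the Elliott--Halberstam mean-value estimate \eqref{eq:EH} for $f_{n,1}$ into the convolution; the constant $\Lower_a=\sum_d g_a(d)\log d$ falls out of expanding $\log(D/d)$, and no explicit secondary constant for the $f_{n,1}$-sum is ever needed. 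You instead factor each squarefree $d$ as $d=em$ with $e=(d,a)\mid a$ squarefree and $(m,na)=1$, reducing to $\phi(a)^{-1}\sum_{e\mid a}\sum_{m\le D/e,(m,na)=1}\mu^2(m)/\phi(m)$, and then invoke an asymptotic for this inner sum with an explicit secondary constant $\gamma+C_0+\sum_{p\mid q}\log p/p$ and a power-saving error. The algebra (the values $\sum_{k\ge 0}g(p^k)=2(p-1)/p$, $\sum_{k\ge 1}kg(p^k)=(p-2)(p-1)/p^2$, the identity $\sum_{e\mid a}\log e=2^{\omega(a)-1}\log a$, and the resulting identification $\Lower_a=2^{\omega(a)-1}(\phi(a)/a)\sum_{p\mid a}(p-2)\log p/p$) is all correct, and the matching with $-\phi(n)\Lower_a/(n\phi(a))$ is exact. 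The trade-off is that you need a sharper and less standard input than the paper does: the asymptotic you quote is not, as stated, \emph{uniform} in $q$ with error $O(X^{-\delta})$ --- the standard Perron/hyperbola arguments give a $q$-dependent factor such as $2^{\omega(q)}$ or $\prod_{p\mid q}(1+p^{-\delta})^{O(1)}$ (compare \eqref{eq:sophis} in the paper, from \cite[p.~43]{MV}, which carries $2^{\omega(n)}$). In the ranges at hand ($q=na\le n^{1+1/100}$, $X=D/e\ge n^{1/3-1/100}$) such a factor is $n^{o(1)}$ and the power saving still absorbs it, so your argument goes through, but you should state the $q$-dependence explicitly rather than call the estimate uniform. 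The paper's route sidesteps this entirely by only ever appealing to the already-proved Elliott--Halberstam bound with its $(\log\log n)^2$ error. One small aesthetic gain of your approach is that it produces a cleaner error, $O_k(2^{\omega(a)}\log\log n/a)$, slightly better than the $O_k((\log\log n)^2/a)$ that \eqref{eq:EH} forces on the paper.
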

Note that $M_0(a)$ coincides with $\textup{Li}(n-1)$ times the left-hand side of \eqref{eq:final}. Applying the last 4 lemmas and recalling our choice of $D$ in \eqref{eq:choices} we obtain
\begin{equation}\label{eq:primes222} 
	\sum_{\substack{a \le R,\, a \in S\\ \omega(a)\le k \\ (a,n)=1} }\mu^2(a)\bigg|A_a(n) - \phi(n)\frac{2^{\omega(a)}}{a}+\textup{Li}(n-1)\frac{2D_a\phi(n)}{n\phi(a)}\bigg| \ll_{k,B} \frac{\phi(n)\log R+n(\log \log n)^2 }{\log n}\sum_{\substack{ a \le R,\, a \in S \\ \omega(a) \le k\\(a,n)=1}} \frac{\mu^2(a)}{a} + \frac{n}{\log^B n}
\end{equation}
for $1 \le R \le n^{1/100}$. The contribution of the lower order term $\textup{Li}(n-1)2D_a \phi(n)/(n\phi(a) )$ can be absorbed in the right-hand side of \eqref{eq:primes222} using \eqref{eq:dabnd}, giving \eqref{eq:primes2}. This will complete the proof of Proposition~\ref{prop:titch}.

\subsection{Proof of Lemma~\ref{lem:bnds}}
Let $1 \le a \le R$ be a squarefree integer with $(a,n)=1$ and $\omega(a)\le k$. In estimating $S_1(a)$ we consider separately $(s,n)=1$ and $(s,n)>1$:
\begin{equation} 
|S_1(a)| \le \sum_{D<s<\frac{n}{D}}\pi(n-1;[a,s],n)   = \sum_{\substack{D<s<\frac{n}{D}\\(s,n)=1}}\pi(n-1;[a,s],n)+\sum_{\substack{D<s<\frac{n}{D}\\(s,n)>1}}\pi(n-1;[a,s],n) =:T_1 + T_2.
\end{equation}
To bound $T_2$, note $\pi(n-1;[a,s],n) \le \sum_{p \mid n} 1 \ll \log n$ holds for $(s,n)>1$, and so
\[ T_2 \ll \frac{n\log n}{D}\]
which is $\ll n/(a \log n) \ll \phi(n)(\log \log n)^2/(a\log n)$ by our choice of $D$, and the assumption $a \le R$. To bound $T_1$ we use the Brun--Titchmarsh Theorem:
\begin{equation}\label{eq:appBT} T_1 \ll\frac{n}{\log n}\sum_{\substack{ D<s<\frac{n}{D}\\(s,n)=1}} \frac{1}{\phi([a,s])}.
\end{equation}
The theorem saves $\log n$ since $[a,s] <R n/D \le n^{3/4}$ by our choice of $D$, and the assumption $a \le R$. To estimate the sum in \eqref{eq:appBT} we let $g=(a,s)$ and change the order of summation:
\begin{equation}
\sum_{\substack{D<s<\frac{n}{D}\\(s,n)=1}} \frac{1}{\phi([a,s])} \le  \frac{1}{\phi(a)}  \sum_{g \mid a}\sum_{\substack{D<s<\frac{n}{D}\\ g \mid s\\(s,n)=1}} \frac{\phi(g)}{\phi(s)} \le \frac{1}{\phi(a)}  \sum_{g \mid a}  \sum_{\substack{D/g<s'<\frac{n}{Dg}\\ (s',n)=1}} \frac{1}{\phi(s')} 
 \end{equation}
 where we used \eqref{eq:lcm} in the first inequality and \eqref{eq:sub} in the second one. By \cite[p.~43]{MV},  
\begin{equation}\label{eq:sophis}
\sum_{\substack{B< m \le A\\(m,n)=1}}1/\phi(m) \ll \frac{\phi(n)}{n} \log (A/B) + 2^{\omega(n)}(\log B)/B
\end{equation}
holds for $A \ge B \ge 2$. Applying \eqref{eq:sophis} with $(A,B)=(n/(Dg),D/g)$ we find that
\[ T_1 \ll \frac{n}{\log n} \frac{1}{\phi(a)}\frac{\phi(n)}{n}\log (2 + \frac{n}{D^2})\sum_{g \mid a} 1 \ll_k \frac{\phi(n)}{a\log n}\log (2 + \frac{n}{D^2}) \]
since $a$ is squarefree and $\omega(a)\le k$, which is acceptable by our choice of $D$. It remains to estimate $S_3(a)$. We consider separately $(se,n)=1$ and $(se,n)>1$:
\begin{equation} 
|S_3(a)| \le \sum_{\substack{s \le D\\ e> V\\ (se,n)=1}} \mu^2(e)\pi(n-1;[se^2,a],n)+\sum_{\substack{s \le D\\ e> V\\ (se,n)>1}} \mu^2(e)\pi(n-1;[se^2,a],n)=:T_3 + T_4.
\end{equation}
To bound $T_4$ we observe $\pi(n-1;[se^2,a],n) \le \sum_{p \mid n} 1 \ll \log n$ holds when $(se,n)>1$, which leads to
\[ T_4 \ll \log n \sum_{s \le D} \sum_{e \le \sqrt{n/s}} 1 \ll \log n \sqrt{nD} \]
which is $\ll n/(a \log n) \ll \phi(n)(\log \log n)^2/(a \log n)$ due to our choice of $D$, and since $a \le R$. To bound $T_3$ we use the Brun--Titchmarsh Theorem when $e \le  V \log n$ and otherwise use $\pi(n-1;m,n) \le n/m\le n/\phi(m)$:
\begin{equation}\label{eq:twosums} 
T_3 \ll \frac{n}{\log n}\sum_{\substack{s \le D \\ V\log n \ge e> V}} \frac{\mu^2(e)}{\phi([se^2,a])} + n\sum_{\substack{s \le D \\ e> V \log n}} \frac{\mu^2(e)}{\phi([se^2,a])}.
\end{equation}
The theorem saves $\log n$ when $e \le V\log n$ because $[se^2,a] \le D(V \log n)^2 R \le n^{1/2}$ by our choices of $V$ and $D$, and since $a \le R$. Let $g=(a,se^2)$ and observe there are coprime $g_1$ and $g_2$ such that $g=g_1g_2$, $g_1 \mid s$ and $g_2 \mid e$. It follows that
\begin{equation}\label{eq:sumlargev}
\sum_{\substack{s \le D \\ e> V'}} \frac{1}{\phi([se^2,a])} \le   \sum_{g_1 g_2 \mid a} \sum_{g_1 \mid s\le D} \frac{\phi(g_1)}{\phi(s)} \sum_{g_2 \mid e > V'} \frac{\phi(g_2)}{\phi(e^2)\phi(a)} \le \frac{\log n}{V'}   \sum_{ g_1 g_2 \mid a }\frac{1}{\phi(a)} \ll_k  \frac{\log n}{V'} \frac{1}{a}
\end{equation}
for every $V' \ge 1$. Here we used \eqref{eq:lcm}-\eqref{eq:tail}. Applying \eqref{eq:sumlargev} with $V'=V,V \log n$ we find $T_3 \ll n/(a\log n) \ll \phi(n)(\log \log n)^2/(a \log n)$ since $V=\log n$.
\subsection{Proof of Lemma~\ref{lem:bv}}
By Cauchy--Schwarz and the trivial bound $E(n-1;b,n) \ll n/b$, we have
\begin{multline}
\sum_{a \le R}|E_0(a)| \le \sum_{b \le DR} \tau(b)^2 |E(n-1;b,n)| \\
\le \sqrt{\sum_{b \le DR} \tau(b)^4 |E(n-1;b,n)|} \sqrt{\sum_{b \le DR}  |E(n-1;b,n)|} \ll \sqrt{n}  (\log n)^{O(1)} \sqrt{\sum_{b \le DR}  |E(n-1;b,n)|}
\end{multline}
where $b$ stands for $[d,a]$ and $\tau$ is the divisor function.  Since $DR \ll_{B} \sqrt{n}(\log n)^{-B}$ for every $B>0$, the Bombieri--Vinogradov Theorem \cite[Lemma]{ElliottH} implies that 
\[ \sum_{b \le DR}  |E(n-1;b,n)| \ll_B n(\log n)^{-B}\]
and so 
\[\sum_{a \le R}|E_0(a)|  \ll_B n(\log n)^{-B}\]
for every $B>0$. Similarly, letting $b:=[se^2,a]$,
\[\sum_{a \le R}|E_2(a)| \le \sum_{b \le DV^2R} \tau(b)^3 |E(n-1;b,n)| \ll_B n(\log n)^{-B}.\]
\subsection{Proof of Lemma~\ref{lem:mts}}
We complete the $e$-range in $M_2(a)$, obtaining
\begin{align}
	M_2(a) &=  \textup{Li}(n-1) \sum_{\substack{e \ge 1 \\ s \le D \\(se,n)=1}}\frac{\mu(e)}{\phi([se^2,a])} +  M_{2,1}(n)
\end{align}
where, by \eqref{eq:lcm}--\eqref{eq:sub},
\[ M_{2,1}(n) \ll \frac{n}{\log n} \sum_{ \substack{s \le D \\ e> V}} \frac{\mu^2(e)}{\phi([se^2,a])} \le \frac{n}{\phi(a)\log n} \sum_{ \substack{s \le D \\ e> V}} \frac{\mu^2(e)\phi((se^2,a))}{\phi(s)\phi(e^2)}.\]
Since $a$ is squarefree, if we let $g=(se^2,a)$, then there are coprime $g_1,g_2$ (not necessarily unique) such that $g=g_1g_2$, $g_1 \mid e$ and $g_2 \mid s$, and so (using \eqref{eq:log}--\eqref{eq:tail})
\[ M_{2,1}(n) \ll \frac{n}{\phi(a)\log n} \sum_{g_1 g_2 \mid a} \sum_{ g_1 \mid e> V} \frac{\mu^2(e)\phi(g_1)}{\phi(e^2)}\sum_{g_2 \mid s \le D} \frac{\phi(g_2)}{\phi(s)} \ll_k \frac{n}{aV} =\frac{n}{a \log n}\]
when $\omega(a)\le k$. Now write
\[  \textup{Li}(n-1) \sum_{\substack{e \ge 1 \\ s \le D \\(se,n)=1}}\frac{\mu(e)}{\phi([se^2,a])} = M_0(a) + M_{2,2}(a)\]
where
\[M_{2,2}(a)=\textup{Li}(n-1) \sum_{\substack{D<se^2\le De^2 \\(se,n)=1}}\frac{\mu(e)}{\phi([se^2,a])}\ll \frac{n}{\log n}  \sum_{D<se^2 \le De^2}\frac{\mu^2(e)}{\phi([se^2,a])}.\]
Using the same $g$, $g_1$ and $g_2$ as before,
\begin{align}
\label{eq:mtl}	\sum_{D < se^2 \le De^2} \frac{\mu^2(e)}{\phi([se^2,a])}\le  \frac{1}{\phi(a)} \sum_{e \ge 1} \frac{1}{\phi(e^2)} \sum_{D/e^2 < s \le D} \frac{\phi(g)}{\phi(s)}\le \frac{1}{\phi(a)} \sum_{g_1 g_2 \mid a} \sum_{g_1 \mid e} \frac{\mu^2(e)\phi(g_1)}{\phi(e^2)} \sum_{\substack{D/e^2 < s \le D\\ g_2 \mid s }} \frac{\phi(g_2)}{\phi(s)} \ll_k \frac{1}{a}
\end{align}
where we used \eqref{eq:lcm}--\eqref{eq:tail}.
\subsection{Proof of Lemma~\ref{lem:mts2}}
For $a$ coprime to $n$ let
\[ f_{n,a}(d) := \frac{\mu^2(d)\phi(a)}{\phi([d,a])}\mathbf{1}_{(d,n)=1}\]
which is multiplicative in $d$. Elliott and Halberstam established \cite[p.~202]{ElliottH}
\begin{equation}\label{eq:EH} \sum_{d\le D} f_{n,1}(d)= \frac{\phi(n)}{n} \log D  + O((\log \log n)^2)
\end{equation}
for $n \le D^5$. We have, for $a$ coprime to $n$, $f_{n,a} = f_{n,1} * g_{a}$ where $g_{a}(d)=g(d)\mathbf{1}_{ d \mid a^{\infty}}$ and $g$ is defined in \eqref{eq:gdef}. Hence, by \eqref{eq:EH},
\begin{align} \label{eq:mt}
	\sum_{\substack{ d\le D\\ (d,n)=1}} \frac{\mu^2(d)}{\phi([d,a])} = \sum_{d\le D} \frac{f_{n,a}(d)}{\phi(a)}= \frac{\phi(n)}{n\phi(a)}\sum_{d \le D/n^{1/5}} g_{a}(d) \log (D/d)+  O\left(\frac{E}{\phi(a)}\right)
\end{align}
for 
\begin{equation}\label{eq:Edef} E := \sum_{d \le D} |g_{a}(d)| (\log \log n)^2+\sum_{d> D/n^{1/5}}|g_{a}(d)|\log d.
\end{equation}
To bound the first sum in $E$ we use $\sum_{d\ge 1} |g_{a}(d)|=2^{\omega(a)-\mathbf{1}_{2 \mid a}}\ll_k 1$. We turn to the second sum in $E$. If $d$ is in the support of $g_{a}$ then $d$ is odd and
\[ |g_{a}(d)| \le \prod_{p \mid d} p \prod_{p^i \mid \mid d} p^{-i} \prod_{p^i \mid \mid d}\left(1+\frac{1}{p-1}\right)^{i-1} \le \frac{a}{d}\prod_{p^i \mid \mid d}\left(1+\frac{1}{p-1}\right)^{i-1}\le \frac{a}{d}  \left(\frac{3}{2}\right)^{\log d/\log 3}.\]
If $\omega(a)\le k$ then there are $\ll_k (\log T)^k$ integers $d \in [T,2T)$ with $d \mid a^{\infty}$, so
\begin{equation}\label{eq:dyadic}
	\sum_{d\in [T,2T)} |g_{a}(d)|  \ll_k \frac{a(\log T)^k}{T} T^{\log(3/2)/\log 3} .
\end{equation}
This implies that under our assumptions, $E/\phi(a) \ll_k (\log \log n)^2/a$.
With the same $E$ as in \eqref{eq:Edef}, we express the sum in the right-hand side of \eqref{eq:mt}  as
\[ \sum_{ d \le D/n^{1/5}} g_{a}(d) \log (D/d) = \log D\prod_{p \mid a} \big( \sum_{i=0}^{\infty} g_{a}(p^i)\big) - \sum_{d\ge 1} g_{a}(d) \log d + O(E).\]
The $p$-product is $2^{\omega(a)}\phi(a)/a$ and the $d$-sum is $\Lower_a$.

\section{Proof of Proposition~\ref{prop:titchstronger}}\label{sec:titchstronger}
As in the proof of Proposition~\ref{prop:titch}, we work with $A_a(n)$ (defined in \eqref{eq:sumsa}) and reduce the lemma to proving
\begin{equation}\label{eq:primes22} 
	\sum_{\substack{1 \le a \le n^{\eta}\\ \omega(a)\le k \\ (a,n)=1} }\mu^2(a)\bigg|A_a(n) - \phi(n)\bigg(\frac{2^{\omega(a)}}{a}-\frac{2\Lower_a}{\phi(a)\log n}\bigg)\bigg| \ll_k \frac{n}{\log n}(\log \log n)^{O_k(1)}.
\end{equation}
We write $2^{\omega(n-p)}$ as a divisor sum, separate the contribution of large square divisors, and apply divisor switching. We have, for $m=n-p$,
\[2^{\omega(m)} = \sum_{m=ds}\mu^2(d) =\sum_{m=e^2ds}\mu(e)=\sum_{\substack{m=e^2ds\\ e>V}}\mu(e)+\sum_{\substack{m=e^2ds\\ e\leq V\\ d\leq D}}\mu(e)+\sum_{\substack{m=e^2ds\\ e\leq V\\ s<  m/(e^2 D)}}\mu(e)\]
with the choices \[V=n^{2\eta} \qquad \text{and} \qquad D=n^{1/2+2\eta}.\] Then, given $a\geq 1$, we split $A_a(n)$ accordingly:
\[    A_a(n)=\sum_{\substack{p<n\\ a|n-p}}\bigg(\sum_{\substack{n-p=e^2ds\\ e>V}}\mu(e)+\sum_{\substack{n-p=e^2ds\\ e\leq V\\ d\leq D}}\mu(e)+\sum_{\substack{n-p=e^2ds\\ e\leq V\\ s<  (n-p)/(e^2 D)}}\mu(e) \bigg)=:S_0(a)+S_1(a)+S_2(a),\]
so that 
\begin{align}
 S_0(a)&=\sum_{\substack{V<e\leq \sqrt{n}\\ d\leq n}}\mu(e)\pi(n-1;[de^2,a],n),\qquad S_1(a)=\sum_{\substack{ e\leq V\\ d\leq D}}\mu(e) \pi(n-1;[de^2,a],n), \\
 S_2(a)&=\sum_{\substack{e\leq V\\ s<  (n-2)/(e^2 D)}}\mu(e)\pi(n-se^2D-1;[se^2,a],n).
\end{align}
Write $E(m;q,a)=\pi(m;q,a)-\frac{\textup{Li}(m) \mathbf{1}_{(q,a)=1}}{\varphi(a)}$ and accordingly
\begin{align*}
	S_i(a)&=N_i(a)+E_i(a) \qquad (i=1,2)
\end{align*}
for
\begin{align}
	N_1(a) &= \textup{Li}(n-1) \sum_{\substack{e \le V \\ d \le D\\(de,n)=1}}\frac{\mu(e)}{\phi([de^2,a])} , \qquad E_1(a) = \sum_{\substack{e \le V \\ d \le D\\(de,n)=1}}\mu(e) E(n-1;[de^2,a],n),\\ 
	N_2(a) &= \sum_{\substack{e \le V \\ s<(n-2)/(e^2 D)\\(se,n)=1}} \frac{\mu(e)\textup{Li}(n-se^2D-1)}{\phi([se^2,a])},\qquad
	E_2(a) = \sum_{\substack{e \le V \\ s<(n-2)/(e^2 D)\\(se,n)=1}} \mu(e)E(n-se^2D-1;[se^2,a],n).
\end{align}
The rest of the proof is separated into four lemmas, proved below, which together establish \eqref{eq:primes22}. This will complete the proof of Proposition~\ref{prop:titchstronger}.
\begin{lem}\label{lem:S0}
	We have
	$\sum_{a \le n^{\eta}}|S_0(a)|\ll n^{1-\eta}$.
\end{lem}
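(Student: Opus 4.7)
\medskip

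\textbf{Proof proposal for Lemma~\ref{lem:S0}.} The plan is to dispose of $S_0(a)$ by the combinatorial/trivial route: reinterpret each inner sum over $e$ as a count of large square divisors of $n-p$, sum over $a$ before summing over $p$, and exploit the sparseness of integers with a square divisor above $V^2 = n^{4\eta}$. No arithmetic progression input beyond a trivial divisor-counting bound is needed.

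First I would rewrite $S_0(a)$ by swapping the role of the progressions and the direct divisibility conditions. Since $\pi(n-1;[de^2,a],n)$ counts primes $p<n$ with $a\mid n-p$ and $de^2\mid n-p$ simultaneously, collecting the $d$-sum gives
\begin{equation}
S_0(a)=\sum_{\substack{p<n\\ a\mid n-p}}\;\sum_{\substack{e>V\\ e^2\mid n-p}}\mu(e)\,\tau\!\left(\frac{n-p}{e^2}\right),
\end{equation}
so that $|S_0(a)|\le \sum_{p<n,\;a\mid n-p}\sum_{e>V,\;e^2\mid n-p}\tau((n-p)/e^2)$. Summing over $a\le n^{\eta}$ and interchanging the outer two summations yields
\begin{equation}
\sum_{a\le n^{\eta}}|S_0(a)|\le \sum_{V<e\le\sqrt{n}}\sum_{\substack{p<n\\ e^2\mid n-p}}\tau\!\left(\frac{n-p}{e^2}\right)\tau_{\le n^{\eta}}(n-p),
\end{equation}
where $\tau_{\le X}(m):=\#\{a\le X:a\mid m\}\le \tau(m)$.

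Next I would drop the primality of $p$, write $n-p=e^2m$, and use $\tau(e^2m)\le \tau(e^2)\tau(m)$ to obtain
\begin{equation}
\sum_{a\le n^{\eta}}|S_0(a)|\le \sum_{V<e\le \sqrt{n}}\tau(e^2)\sum_{m\le n/e^2}\tau(m)^2.
\end{equation}
The standard bound $\sum_{m\le M}\tau(m)^2\ll M(\log M)^3$, together with $\tau(e^2)\ll e^{o(1)}$, gives
\begin{equation}
\sum_{a\le n^{\eta}}|S_0(a)|\ll n(\log n)^{O(1)}\sum_{e>V}\frac{1}{e^{2-o(1)}}\ll \frac{n(\log n)^{O(1)}}{V}=n^{1-2\eta+o(1)},
\end{equation}
which is $\ll n^{1-\eta}$ once $n$ is large enough.

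The main potential obstacle is merely a bookkeeping one: making sure the dyadic tail $\sum_{e>V} e^{-2+o(1)}$ is controlled uniformly in $a$ after the order of summation has been swapped, and that the factor $\tau_{\le n^{\eta}}$ coming from the sum over $a$ is not larger than the full $\tau(n-p)$. Both are clear, and no deeper arithmetic information is needed for this lemma, reflecting the fact that $S_0$ is just an error term corresponding to integers $n-p$ with an abnormally large square divisor.
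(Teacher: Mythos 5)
Your proof is correct, and it reaches essentially the same conclusion as the paper by an equally elementary but slightly more elaborate route. The paper simply applies the trivial bound $\pi(n-1;q,n)\le n/q$ term-by-term, giving $|S_0(a)| \le \sum_{e>V,\,d\le n} n/[de^2,a] \le \sum_{e>V,\,d\le n} n/(de^2) \ll n\log n/V$ uniformly in $a$, and then multiplies by $n^{\eta}$ for the sum over $a$; that yields $n^{1-\eta}\log n$, which suffices (one can shrink $\eta$). You instead recombine the $d$-sum back into a $\tau((n-p)/e^2)$ factor, swap the $a$-sum into a $\tau_{\le n^{\eta}}(n-p)$ factor, drop primality, and invoke $\sum_{m\le M}\tau(m)^2\ll M\log^3 M$. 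Both approaches are correct and require nothing beyond trivial divisor-counting; yours actually produces the marginally sharper exponent $n^{1-2\eta+o(1)}$ by exploiting that the sum over $a\le n^{\eta}$ only costs a divisor factor rather than a multiplicative factor of $n^{\eta}$, whereas the paper's one-line argument is shorter and does not need the $\tau^2$ mean-value estimate.
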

\begin{lem}\label{lem:E1}
	For every $B>0$, $\sum_{a \le n^{\eta},\,(a,n)=1}|E_1(a)|\ll_B  n(\log n)^{-B}$.
\end{lem}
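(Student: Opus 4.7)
The plan is to open $|E_1(a)|$ by the triangle inequality, interchange summations, and then invoke a Bombieri--Vinogradov-type estimate for primes in progressions to moduli with a large square divisor, as established in \cite{Blomer}.

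The key structural observation is that every modulus in $E_1(a)$ carries the forced square $e^2$. For squarefree $a$ (the only relevant case via the factor $\mu^2(a)$ in \eqref{eq:primes22}), writing $a_1=(a,e)$ and $a_2=a/a_1$ yields $a_1\mid e$, $(a_2,e)=1$, and a direct greatest-common-divisor computation gives
\[ [de^2,a] \;=\; [d,a_2]\,e^2, \]
with $[d,a_2]\le Dn^\eta/a_1\le n^{1/2+3\eta}$. After the triangle inequality, interchange of summation, and parametrisation by $(e,a_1,q')$ with $q'=[d,a_2]$, each value of $q'$ is produced by at most $3^{\omega(q')}$ pairs $(d,a_2)$ (since $a_2$ is squarefree while $d$ is unconstrained), so that
\[ \sum_{\substack{a\le n^\eta\\ (a,n)=1}}|E_1(a)| \;\ll\; \sum_{e\le V}\tau(e)\sum_{q'\le n^{1/2+3\eta}} 3^{\omega(q')}\,\bigl|E(n-1;\,e^2 q',\,n)\bigr|. \]

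Now apply Cauchy--Schwarz to the $(e,q')$-sum, splitting $r(e,q')|E|=r(e,q')|E|^{1/2}\cdot |E|^{1/2}$. The first factor, bounded via the trivial estimate $|E(n-1;q,n)|\ll n/\phi(q)$ together with standard mean-value bounds for small powers of divisor functions divided by $\phi$, contributes at most $n^{1/2}(\log n)^{O(1)}$. The second factor is the substantive one: one needs
\[ \sum_{e\le V}\sum_{q'\le n^{1/2+3\eta}} \bigl|E(n-1;\,e^2 q',\,n)\bigr| \;\ll_B\; n(\log n)^{-B} \]
for every fixed $B>0$. Although the total modulus $e^2 q'\le n^{1/2+7\eta}$ is past the Bombieri--Vinogradov range, the mandatory square divisor $e^2$ with $e\le V=n^{2\eta}$ allows the spectral-theoretic input of \cite{Blomer} to furnish precisely such a bound.

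The main obstacle is this final step, which both pins down the admissible value of $\eta>0$ in terms of the range of moduli permitted by Blomer's theorem and is the only place beyond Bombieri--Vinogradov territory. Everything else is a routine variant of the Cauchy--Schwarz $+$ Bombieri--Vinogradov manipulation appearing in the proof of Lemma~\ref{lem:bv}.
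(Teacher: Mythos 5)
Your factorisation $[de^2,a]=[d,a_2]\,e^2$ (with $a_1=(a,e)$, $a_2=a/a_1$ for squarefree $a$) is correct, and the Cauchy--Schwarz reduction to showing
\[ \sum_{e\le V}\sum_{q'\le n^{1/2+3\eta}}\bigl|E(n-1;\,e^2q',\,n)\bigr|\;\ll_B\;n(\log n)^{-B} \]
is the right shape of statement to want. The genuine gap is that this bound does \emph{not} follow from Proposition~\ref{PrimeAp} in the form you have set things up, because the factorisation $e^2q'$ is not a \emph{coprime} factorisation: $q'=[d,a_2]$ can share arbitrarily large prime powers with $e$ (only $(a_2,e)=1$, not $(d,e)=1$). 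Proposition~\ref{PrimeAp} sums over a ``dense'' modulus $q$ with $(q,ad)=1$ against a ``sparse'' modulus $d\le x^{\varpi}$ carrying divisor-bounded weights; to invoke it you must first exhibit the total modulus as $d'\cdot g$ with $d'$ \emph{coprime} to the small part $g$. If you try to repair this by further extracting $q_1=(q',e^{\infty})$, the sparse part $e^2q_1$ is no longer guaranteed to be $\le n^{\varpi}$, so you land exactly on the problem the paper's proof is organised to handle: it writes $d=bd'$ with $b=(d,(ae)^{\infty})$, so that $d'$ is coprime to $ae$ and the full modulus is the coprime product $d'\,[be^2,a]$; the range $b\le J$ is then treated by Proposition~\ref{PrimeAp} (after a finer-than-dyadic decomposition to make the $d'$-range independent of $b$), while $b>J$ is disposed of trivially. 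None of this secondary structure --- the cut at $b\le J$, the trivial estimate of the complementary $E_{1,2}$, and the $\lambda$-decomposition needed because the inner range $d'\le D/b$ depends on $b$ --- appears in your argument.

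In short: the Cauchy--Schwarz framing and the observation that the forced square $e^2$ pushes you into Blomer-type territory are both correct, but the step ``the mandatory square divisor $e^2$ allows the spectral-theoretic input to furnish precisely such a bound'' is exactly where all the work lies, and the coprimality obstruction means the bound you have written down cannot be obtained by a direct appeal to Proposition~\ref{PrimeAp}. You need the coprime decomposition $d=bd'$ first, together with the accompanying truncation in $b$ and the finer-than-dyadic device.
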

\begin{lem}\label{lem:E2}
	For every $B>0$,
	$\sum_{a \le n^{\eta},\,(a,n)=1}|E_2(a)|\ll_B  n(\log n)^{-B}$.
\end{lem}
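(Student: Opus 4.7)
The approach mirrors the proof of Lemma~\ref{lem:bv}: switch the order of summation so that $b = [se^2, a]$ becomes the outer variable, apply Cauchy--Schwarz, and invoke Bombieri--Vinogradov. The essential twist compared to Lemma~\ref{lem:bv} is that the error term $E(n - se^2 D - 1; [se^2, a], n)$ is evaluated at a moving point $m_{s,e} = n - se^2 D - 1$ depending on $s$ and $e$, so we must use the $\max_{y \le n}$ form of Bombieri--Vinogradov rather than the fixed-endpoint version.

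First I would check that the modulus remains within the Bombieri--Vinogradov range. From the constraint $s < (n-2)/(e^2 D)$ and the choice $D = n^{1/2+2\eta}$, one has $se^2 < n/D = n^{1/2-2\eta}$, and therefore
\[ b := [se^2, a] \le se^2 \cdot a \le n^{1/2-2\eta} \cdot n^{\eta} = n^{1/2-\eta}. \]
For $\eta$ chosen sufficiently small this lies well inside the range $\sqrt{n}/(\log n)^{C}$ for any fixed $C$. Moreover $(b,n)=1$ automatically, since $(se,n) = (a,n) = 1$.

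Next I would drop $\mu(e)$ inside the absolute value and swap the summations:
\[ \sum_{\substack{a \le n^{\eta}\\(a,n)=1}} |E_2(a)| \le \sum_{b \le n^{1/2-\eta}} N(b) \max_{y \le n} |E(y; b, n)|, \]
where $N(b)$ counts the admissible triples $(a, s, e)$ producing a given $b$. Since $a$, $s$, and $e^2$ each divide $b$, the triple is determined by three divisors of $b$, so $N(b) \ll \tau(b)^3$. Applying Cauchy--Schwarz,
\[ \sum_{b} N(b) \max_{y \le n} |E(y; b, n)| \le \biggl( \sum_b N(b)^2 \max_{y} |E(y; b, n)| \biggr)^{1/2} \biggl( \sum_b \max_{y} |E(y; b, n)| \biggr)^{1/2}. \]
The first factor is $\ll n(\log n)^{O(1)}$, using the trivial estimate $\max_y |E(y;b,n)| \ll n \log \log n /b$ together with $\sum_{b \le n} \tau(b)^6/b \ll (\log n)^{O(1)}$. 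The second factor is $\ll_{B'} n (\log n)^{-B'}$ by the $\max_{y \le n}$ form of Bombieri--Vinogradov (a standard strengthening of the statement cited from \cite{ElliottH}). Multiplying and choosing $B'$ large enough yields the lemma.

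The only delicate point is the verification $b \le n^{1/2-\eta}$, which relies crucially on the upper truncation $s < (n-2)/(e^2 D)$; without it BV would be out of reach. In $E_1$ the analogous variable $d$ can be as large as $D = n^{1/2+2\eta}$, which is precisely why Lemma~\ref{lem:E1} requires the deeper equidistribution input from \cite{Blomer} rather than classical Bombieri--Vinogradov.
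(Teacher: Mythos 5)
Your proof is correct and follows essentially the same route as the paper: reduce to $\sum_{d\le n^{1/2-\eta}}\tau(d)^3\max_{y\le n}|E(y;d,n)|$ (noting $se^2<n/D=n^{1/2-2\eta}$ and $a\le n^\eta$ forces $[se^2,a]\le n^{1/2-\eta}$), then Cauchy--Schwarz plus the maximal form of Bombieri--Vinogradov. The only slip is cosmetic: you describe the first Cauchy--Schwarz factor as $\ll n(\log n)^{O(1)}$ when, after taking the square root, it is $\ll\sqrt{n}(\log n)^{O(1)}$; the final multiplication and choice of $B'$ still give the stated bound.
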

\begin{lem}\label{lem:m}
	Fix $k \ge 1$. Suppose $1 \le a \le n^{\eta}$ is squarefree with $\omega(a) \le k$ and $(a,n)=1$. We have
	\begin{align}\label{eq:M1M2} N_{1}(a)+N_{2}(a) =\phi(n) \bigg( \frac{2^{\omega(a)}}{a} - \frac{2\Lower_a}{\phi(a)\log n}\bigg) +O_k\bigg( \frac{n (\log \log n)^2 }{a \log n} \bigg).
	\end{align}
\end{lem}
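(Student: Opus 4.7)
The plan is to re-express $N_1(a)$ and $N_2(a)$, modulo admissible errors, as $\textup{Li}(n-1)$ times partial sums of $\mu^2(m)/\phi([m,a])$, to which we apply Lemma~\ref{lem:mts2}.

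For $N_2(a)$, substitute $m:=se^2$. Since $s<(n-2)/(e^2 D)$ is equivalent to $m<(n-2)/D$ independent of $e$, and $(se,n)=1$ is equivalent to $(m,n)=1$, the identity $\sum_{e^2\mid m}\mu(e)=\mu^2(m)$ converts the inner $e$-sum (truncated at $V$) into $\mu^2(m)$ plus a tail from $e>V$ controlled by $\sum_{e>V}1/\phi(e^2)\ll 1/V=n^{-2\eta}$. After also writing $\textup{Li}(n-se^2D-1)=\textup{Li}(n-1)-\int_{n-se^2D-1}^{n-1}dt/\log t$ and estimating the resulting secondary contribution via $\sum_{m<(n-2)/D,\,\mu^2(m)=1}m/\phi([m,a])\ll_k 2^{\omega(a)}n/(D\phi(a))$, one obtains
\[ N_2(a)=\textup{Li}(n-1)\sum_{\substack{m<(n-2)/D\\ (m,n)=1}}\frac{\mu^2(m)}{\phi([m,a])}+O_k\!\left(\frac{n(\log\log n)^2}{a\log n}\right).\]

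For $N_1(a)$, analogously substitute $m:=de^2$, so that $d\le D$ becomes $m\le De^2$. For $m\le V^2$, every $e$ with $e^2\mid m$ satisfies $e\le V$ automatically, and the inner $\mu(e)$-sum is $\mu^2(m)$. Two residual correction ranges arise --- $V^2<m\le D$ (where the inner sum differs from $\mu^2(m)$ by a tail from $e>V$) and $D<m\le DV^2$ (where only those $e$ with $\sqrt{m/D}\le e\le V$ contribute) --- both controlled via dyadic decomposition in $e$ together with the bound $\sum_{e>V}1/\phi(e^2)\ll 1/V$, yielding
\[ N_1(a)=\textup{Li}(n-1)\sum_{\substack{m\le D\\ (m,n)=1}}\frac{\mu^2(m)}{\phi([m,a])}+O_k\!\left(\frac{n(\log\log n)^2}{a\log n}\right).\]

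Finally, Lemma~\ref{lem:mts2} applies to both $\sum_{m\le D}$ and $\sum_{m<(n-2)/D}$ provided $\eta$ is sufficiently small (so that $(n-2)/D>n^{1/3}$ and $a\le n^\eta\le n^{1/100}$). Adding the two main-term expressions, the identity $\log D+\log((n-2)/D)=\log(n-2)=\log n+O(1/n)$ collapses the combined logarithmic coefficient to $\log n$, so that multiplying by $\textup{Li}(n-1)\sim n/\log n$ produces the leading term $\phi(n)\cdot 2^{\omega(a)}/a$; the two $\Lower_a$-contributions do not cancel but add, producing the factor of $2$ in the secondary term $-2\phi(n)\Lower_a/(\phi(a)\log n)$. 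The main obstacle is the bookkeeping in the $N_1(a)$ reduction, where one must show that $d\le D$ (rather than $m=de^2\le D$) yields only admissible errors despite the corresponding residual $m$-ranges being logarithmic in size; the key is the quantitative tail bound above combined with the choice $V=n^{2\eta}$.
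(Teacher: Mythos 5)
Your proof is correct and follows essentially the same route as the paper: both reduce $N_1(a)$ and $N_2(a)$ to $\textup{Li}(n-1)\sum_{m\le X}\mu^2(m)/\phi([m,a])$ with $X=D$ and $X\approx n/D$ respectively, apply Lemma~\ref{lem:mts2} to each, and add, using $\log D+\log((n-2)/D)=\log n+O(1/n)$ for the main term while the two $\Lower_a$ contributions combine to give the factor $2$. The only differences are organizational: the paper handles $N_1$ by invoking Lemma~\ref{lem:mts} rather than re-deriving it, and for $N_2$ it completes the $e$-range before expanding $\textup{Li}$ (picking up a $2^{\#\{p^2\mid m\}}$ weight) whereas you truncate at $e\le V$ first (retaining $\mu^2(m)$), but both yield the same admissible errors.
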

\subsection{Proof of Lemma~\ref{lem:S0}}
The bound $\pi(n-1;q,n)\leq n/q$ gives
\[    \left|S_0(a)\right|\leq  \sum_{\substack{e>V\\ d\leq n}} \frac{n}{[de^2,a]} \le \sum_{\substack{e>V\\ d\leq n}} \frac{n}{de^2} \ll \frac{n \log n}{V}\]
which implies the result.
\subsection{Proof of Lemma~\ref{lem:E2}}
Letting $d:=[se^2,a]$, we have
\begin{align*}
	\sum_{\substack{a \le n^{\eta}\\(a,n)=1}}|E_2(a)|=\sum_{\substack{a \le n^{\eta}\\(a,n)=1}}\bigg|\sum_{\substack{e\leq V\\ s< (n-2)/(e^2D)\\(se,n)=1} }\mu(e) E(n-se^2D-1;[se^2,a],n)\bigg|\le \sum_{\substack{d\leq n^{1/2 - \eta} \\ (d,n)=1}}\tau^3(d)\max_{y\leq n} \left|E(y;d,n) \right|.
\end{align*}
The conclusion follows by applying Cauchy--Schwarz and the Bombieri--Vinogradov Theorem \cite[Lemma]{ElliottH} (cf.~the proof of Lemma~\ref{lem:bv}).
\subsection{Proof of Lemma~\ref{lem:E1}}
As $E_1(a)$ requires us to break the $1/2$-barrier in the level of distribution, it is the more delicate error term. In the previous two lemmas, $\eta$ can be close to $1$, but not here. We have
\begin{align*}
	\sum_{\substack{a \le n^{\eta}\\ (a,n)=1}}|E_1(a)|&=\sum_{\substack{a \le n^{\eta}\\ (a,n)=1}}\bigg|\sum_{\substack{e \le V\\ (e,n)=1}} \mu(e)\sum_{\substack{d \le D\\ (d,n)=1}}E(n-1;[de^2,a],n)\bigg|\\ 
	&= \sum_{\substack{a \le n^{\eta}\\ (a,n)=1}}\bigg|\sum_{\substack{e \le V\\ (e,n)=1 \\ b\mid (ae)^\infty}} \mu(e)\sum_{\substack{d \le D\\  b \mid d \\ (d/b,nae)=1}}E(n-1;[de^2,a],n)\bigg|\\
	&=\sum_{\substack{a \le n^{\eta}\\ (a,n)=1}}\bigg|\sum_{\substack{e \le V\\ (e,n)=1 \\ b\mid (ae)^\infty}} \mu(e)\sum_{\substack{d' \le D/b \\ (d',nae)=1}}E(n-1;d'[be^2,a],n)\bigg|,
\end{align*}
where we substituted $d=bd'$. Let $E_{1,1}$ denote the contribution to the above with $b\le 
J=n^{2\eta}$ and $E_{1,2}$ the remainder. 
For $E_{1,1}$ we apply the triangle inequality to arrive at
\begin{align*}
	E_{1,1}\leq \sum_{\substack{a \le n^{\eta}\\ (a,n)=1}} \sum_{\substack{e \le V\\ (e,n)=1}}\mu^2(e)\sum_{\substack{ b\leq J\\ b \mid (ae)^{\infty}} }\bigg|\sum_{\substack{d' \le D/b\\ (d',nae)=1}}E(n-1;d'[be^2,a],n)\bigg|.
\end{align*}
To make the summation range of $d'$ independent of $b$, we apply a standard finer-than-dyadic decomposition on the $d'$ summation  (see e.g.~the proof of \cite[Thm.~1.1]{Assing}). More precisely, set $\lambda=1+(\log n)^{-B-100}$ and decompose $E_{1,1}$ into the contribution of $d' \leq D/J$ and at most $(\log n)^{B+101}$ sums with the additional condition $(D/J)\lambda^k < d'\leq (D/J) \lambda^{k+1}$ for some $k\geq 0$. Thus, by the triangle inequality,
\[\sum_{\substack{ b\leq J\\ b \mid (ae)^{\infty}} }\bigg|\sum_{\substack{d' \le D/b\\ (d',nae)=1}}\bigg| \leq \sum_{\substack{b\leq J\\ b \mid (ae)^{\infty} }}\bigg|\sum_{\substack{d'\leq D/J\\ (d',nae)=1}}\bigg|+ \sum_{\substack{b\leq J \\b \mid (ae)^{\infty}\\k\ge 0}} \bigg| \sum_{\substack{(D/J)\lambda^k < d'\leq (D/J) \lambda^{k+1} \\ d'\leq D/b\\(d',nae)=1}} \bigg|.\]
\begin{comment}
only considering the relevant summation variables for the moment, we write
\begin{align*}
    E_{1,1}&=\sum_{b\leq J } \left|\sum_{d'\leq D/J}+\sum_{k}\sum_{\substack{(D/J)\lambda^k < d'\leq (D/J) \lambda^{k+1} \\ d'\leq D/b}} \right|\\
    &\leq \sum_{b\leq J } \left|\sum_{d'\leq D/J}\right|+ \sum_{\substack{b\leq J \\k}} \left| \sum_{\substack{(D/J)\lambda^k < d'\leq (D/J) \lambda^{k+1} \\ d'\leq D/b}} \right|.
\end{align*}
\end{comment}
For each $b\le J$ there exists a unique $k_0(b)$ such that $(D/J)\lambda^{k_0(b)} < D/b \leq (D/J) \lambda^{k_0(b)+1} $. For $k<k_0(b)$ the condition $d'\leq D/b$ is implied by the other range condition ($d' \le (D/J)\lambda^{k+1}$) and for $k>k_0(b)$ the sum over $d'$ is empty. 
\begin{comment}
Thus
\begin{align*}
    E_{1,1}&\leq \sum_{b\leq J } \left|\sum_{d'\leq D/J}\right|+\sum_{\substack{b\leq J \\ k< k_0(b)}} \left| \sum_{\substack{(D/J)\lambda^k < d'\leq (D/J) \lambda^{k+1} \\ }}\right|+\sum_{\substack{b\leq J\\ k=k_0(b)}}\left| \sum_{\substack{(D/J)\lambda^k < d'\leq (D/J) \lambda^{k+1} \\ d'\leq D/b}} \right|\\
    &\leq  \sum_{b\leq J } \left|\sum_{d'\leq D/J}\right|+\sum_{\substack{b\leq J \\ k< k_0(b)}} \left| \sum_{\substack{ d'\leq (D/J) \lambda^{k+1} \\ }}\right|+\sum_{\substack{b\leq J \\ k< k_0(b)}} \left| \sum_{\substack{ d'\leq (D/J) \lambda^{k} \\ }}\right|+\sum_{\substack{b\leq J\\ k=k_0(b)}}\left| \sum_{\substack{(D/J)\lambda^k < d'\leq (D/J) \lambda^{k+1} \\ d'\leq D/b}} \right|\\
     &\leq  \sum_{b\leq J } \left|\sum_{d'\leq D/J}\right|+\sum_{\substack{b\leq J \\ k< k_0(1)}} \left| \sum_{\substack{ d'\leq (D/J) \lambda^{k+1} \\ }}\right|+\sum_{\substack{b\leq J \\ k< k_0(1)}} \left| \sum_{\substack{ d'\leq (D/J) \lambda^{k} \\ }}\right|+\sum_{\substack{b\leq J\\ k=k_0(b)}}\left| \sum_{\substack{(D/J)\lambda^k < d'\leq (D/J) \lambda^{k+1} \\ d'\leq D/b}} \right|\\
    &\ll (\log n)^{B+101} \max_{D'\leq D} \sum_{b\leq J} \left|\sum_{d'\leq D'}\right|+\sum_{\substack{b\leq J\\ k=k_0(b)}}\left| \sum_{\substack{(D/J)\lambda^k < d'\leq (D/J) \lambda^{k+1} \\ d'\leq D/b}} \right|,
\end{align*}
\end{comment}
The term with $k=k_0(b)$ can be estimated trivially, indeed it is
\begin{align*}
    &\sum_{\substack{a \le n^{\eta}\\ (a,n)=1}} \sum_{\substack{e \le V\\ (e,n)=1}}\mu^2(e)\sum_{\substack{ b\leq J\\ b \mid (ae)^{\infty}\\ k=k_0(b)} }\bigg|\sum_{\substack{(D/J)\lambda^k < d'\leq (D/J) \lambda^{k+1} \\ d'\leq D/b\\ (d',nae)=1}}E(n-1;d'[be^2,a],n)\bigg|\\
    &\ll \sum_{a \le n^{\eta}} \sum_{e \le V}\sum_{\substack{ b\leq J\\ b \mid (ae)^{\infty}\\ k=k_0(b)} }  \sum_{\substack{(D/J)\lambda^k < d'\leq (D/J) \lambda^{k+1} }}\frac{n}{\varphi(d')\varphi([be^2,a])}\\
    &\ll  n(\log \log n)^2 \sum_{\substack{ a\leq n^\eta\\ b'\leq JV^2}} \frac{\tau(b')}{[b',a]} \sum_{\substack{(D/J)\lambda^{k_0(b)} < d'\leq (D/J) \lambda^{k_0(b)+1} }} \frac{1}{d'}\\
    &\ll  n (\log n)^{4} \log \lambda \ll (\log n)^{-B-50},
\end{align*}
recalling $\lambda=1+(\log n)^{-B-100}$. Here $b'$ stands for $be^2$, in the second inequality we used $\phi(m) \ge m/\log \log m$ \cite[Thm.~2.9]{MV} and in third one we used $\sum_{a\le T,\,b' \le S}(b',a)  \tau(b')\ll TS (\log (TS+2))^3$.

Consequently after some simple reshuffling of the terms with $k<k_0(b)$ with the help of the triangle inequality, we have reduced the required $E_{1,1}$ estimate for Lemma~\ref{lem:E1} to showing 
\begin{align}\label{eq:E11D'estimate}
    E_{1,1,D'}\ll n(\log n)^{-2B-101},
\end{align}
for any $D'\leq D$, where
\begin{align*}
	E_{1,1,D'}=\sum_{\substack{a \le n^{\eta}\\ (a,n)=1}} \sum_{\substack{e \le V\\ (e,n)=1}}\mu^2(e)\sum_{\substack{ b\leq J\\ b \mid (ae)^{\infty}} }\bigg|\sum_{\substack{d\leq D' \\ (d',nae)=1}}E(n-1;d'[be^2,a],n)\bigg|.
\end{align*}
If $[be^2,a]=g$ then $a$, $b$ and $e$ divide $g$ and the condition $(d',ae)=1$ is equivalent to $(d',g)=1$. Thus 
\[ E_{1,1,D'}\le  \sum_{\substack{g \le n^{8\eta}\\(g,n)=1}} \tau(g)^3\bigg| \sum_{\substack{d' \le D'\\(d',ng)=1}} E(n-1;d'g,n)\bigg|.\]
We quote \cite[Prop.~6.1]{Blomer} specialised to $a_2=c=c_0=1$.

\begin{proposition}\label{PrimeAp}
	There is an absolute constant $\varpi>0$ with the following property. 
	Let $x\geq 2$, $Q\ge 2$, $C\in \mathbb N$,  $a\in \mathbb{Z}\setminus \{0\}$  and a sequence $(\lambda_d)_d$ such that 
	\begin{align}
		|a|\leq x^{1+\varpi}, \quad |\lambda_d|\ll \tau(d)^C, \quad Q\leq x^{1/2+\varpi}
	\end{align} 
	hold. Then for any $B>0$ we have 
	\begin{align}
		\sum_{d\leq x^{\varpi}}\lambda_d\sup_{\substack{w \bmod d\\(w,d)=1}}\Big|\sum_{\substack{q\leq Q\\ (q,ad)=1}}\Big(\sum_{\substack{n\leq x\\n\equiv a\bmod q\\ n\equiv w\bmod d}}\Lambda(n)-\frac{1}{\phi(qd)}\sum_{\substack{n\leq x\\ (n, qd)=1}}\Lambda(n)\Big)\Big|\ll_{C,B} x(\log x)^{-B}.
	\end{align}
\end{proposition}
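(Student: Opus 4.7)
The plan is to combine a combinatorial decomposition of the von Mangoldt function with deep bounds on averages of Kloosterman sums coming from the spectral theory of automorphic forms, following the circle of ideas of Bombieri--Friedlander--Iwaniec, Deshouillers--Iwaniec, and Fouvry, as systematised by Blomer.

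First, I would apply the Heath--Brown identity of depth $K=3$ to decompose $\Lambda(n)$ for $n\le x$ as a linear combination of $O((\log x)^K)$ multilinear convolutions whose factors are supported in dyadic ranges. A smooth dyadic partition then reduces the claim to proving, for every factorisation $x\asymp MN$ and every pair of sequences $\alpha_m,\beta_n$ of divisor-function growth,
\[ \sum_{d\le x^{\varpi}}\mu^2(d)\tau(d)^C\sup_{(w,d)=1}\bigg|\sum_{\substack{q\le Q\\ (q,ad)=1}}\bigg(\sum_{\substack{m\sim M,\,n\sim N\\ mn\equiv a\,(q),\,mn\equiv w\,(d)}}\alpha_m\beta_n - M_{q,d}\bigg)\bigg|\ll x(\log x)^{-B-O(1)}, \]
with $M_{q,d}$ denoting the expected main term. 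In the \emph{Type I} regime (one of $M,N$, say $N$, is essentially smooth with $N\ge x^{1/2+3\varpi}$), I would open the inner sum by Poisson summation modulo $qd$; the zero frequency produces the main term and the non-zero frequencies become incomplete Kloosterman sums whose $q$-average is controllable by Weil's bound together with the dispersion method.

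The heart of the proof is the \emph{Type II} regime, where $M\asymp N\asymp x^{1/2}$ and $Q$ approaches $x^{1/2+\varpi}$, so that $qd$ exceeds $x^{1/2}$. One applies Cauchy--Schwarz in the $m$ variable to eliminate $\alpha_m$, opens the square, detects the congruences additively modulo $qd$, and then applies Poisson summation in $m$. After simplification one arrives at multilinear sums of Kloosterman sums of the schematic shape
\[ \sum_{q_1,q_2}\sum_{n_1,n_2}\gamma_{n_1}\overline{\gamma_{n_2}}\, S\!\left(a\overline{n_1 n_2},h;\,q_1 q_2 d\right). \]
Bounds saving a power of $x$ beyond the Weil--trivial estimate are provided by the Deshouillers--Iwaniec mean estimates, which rest on Kuznetsov's trace formula for $\Gamma_0(r)\backslash\mathbb{H}$ together with the spectral large sieve. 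Uniformity in the small modulus $d$ is obtained because $d$ merges with the level of the automorphic forms, and the Kim--Sarnak bound on exceptional eigenvalues of Maass forms is applied uniformly across levels.

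Two further devices are needed. The supremum over $w\bmod d$ is removed before Cauchy--Schwarz, either by freezing signs $\epsilon=\pm 1$ (which costs nothing under absolute values) or by inserting additive characters $\bmod\,d$, which costs only a factor $d\le x^{\varpi}$ and can be absorbed since $\varpi$ is chosen small. The coprimality $(d,q)=1$ lets us factor the composite modulus cleanly as $qd$, which is crucial for preserving the algebraic structure needed by the Kuznetsov formula. The divisor-bounded weights $\lambda_d$ and $\tau(d)^C$ are absorbed via Cauchy--Schwarz at a negligible logarithmic cost. The main obstacle is the Type II estimate when $M\asymp N\asymp x^{1/2}$ and $Q$ is close to $x^{1/2+\varpi}$: this is precisely where the $1/2$-barrier is broken, and the admissible constant $\varpi$ is pinned down by the Kim--Sarnak exponent and by the combinatorial losses inherent in the Heath--Brown decomposition. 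Keeping all error terms uniform in both the small modulus $d$ and the residue class $w$, while retaining divisor-bounded coefficients and summing losslessly over $d\le x^\varpi$, is the most delicate technical point.
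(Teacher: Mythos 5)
First, a point of order: the paper does not prove this proposition at all. It is quoted verbatim from Prop.~6.1 of Blomer--Grimmelt--Li--Rydin Myerson (specialised to $a_2=c=c_0=1$), so the relevant comparison is with the proof in that reference rather than with anything in this paper. Your outline does capture the general architecture used there and in its antecedents (Bombieri--Friedlander--Iwaniec, Fouvry, Drappeau): a combinatorial decomposition of $\Lambda$, a dispersion-method treatment of the resulting bilinear forms, and power savings ultimately supplied by Deshouillers--Iwaniec estimates for sums of Kloosterman sums via the Kuznetsov formula and the spectral large sieve.

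As a standalone proof, however, the proposal has genuine gaps. (i) The Type~I/Type~II dichotomy is incomplete: after a Heath--Brown decomposition there remain ``Type~III'' configurations (several smooth factors each of length near $x^{1/3}$, say) which can neither be grouped into a bilinear form with both factors in the admissible dispersion range nor supply a single smooth factor longer than the modulus $qd\approx x^{1/2+\varpi}$. Treating these is a separate, well-known difficulty in every result of BFI type, and you do not address it. (ii) The decisive Type~II estimate is invoked by name only. The presence of the extra modulus $d$, the supremum over $w$, and the required uniformity in $d$ are precisely what make Prop.~6.1 a theorem to be proved rather than a citation of Deshouillers--Iwaniec; none of that work is carried out, and the admissible $\varpi$ is never actually produced. (iii) Your proposed removal of the supremum over $w\bmod d$ by inserting additive characters at a cost of $d\le x^{\varpi}$ is not affordable throughout the argument: in the Type~I ranges and in the main-term analysis the available savings are only powers of $\log x$, so a loss of $x^{\varpi}$ is fatal there. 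The standard (and cost-free) device is to fix, for each $d$, a residue $w_d$ attaining the supremum. In short: the roadmap is the right one, but the analytic core is asserted rather than proved, and in the context of this paper the correct move is simply to quote the result from the literature, as the authors do.
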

As commented after \cite[Prop.~6.1]{Blomer}, this holds for the characteristic function of the primes replacing the von Mangoldt function and so implies \eqref{eq:E11D'estimate}. To prove Lemma~\ref{lem:E1}, it remains to estimate $E_{1,2}$ and we claim
\begin{align}\label{eq:E12bound}
	E_{1,2}\ll n^{1-\eta/2}.
\end{align}
By the  trivial bound $E(n-1;q,a)\ll 1+n/q$ we have
\[  E_{1,2}\ll n\sum_{\substack{a\leq n^\eta \\ e\leq V \\ d'< D/J}} \frac{1}{d'} \sum_{\substack{J<b\leq D/d'\\ b\mid (ae)^\infty }}\frac{1}{[be^2,a]}.\]
Since $[be^2,a] = be^2a/(be^2,a)$, by dyadic decomposition to get \eqref{eq:E12bound} it suffices to show
\begin{equation}\label{eq:dyadicaeb}
	\sum_{\substack{a \sim A\\ e \sim V}} \sum_{\substack{b \sim J \\ b \mid (ae)^{\infty}}} (be^2,a) \ll(2+\log AV)^{C_{\varepsilon}} J^{\varepsilon} A^2 V
\end{equation}
for all $\varepsilon>0$. Here $C_{\varepsilon}>0$ depends only on $\varepsilon>0$ (later occurrences of $C_{\varepsilon}$ might have a different value), $a \sim A$ means $a \in [A,2A)$ and $e \sim V$, $b \sim J$ should be interpreted in the same way. Estimate~\eqref{eq:dyadicaeb} implies $E_{1,2} \ll n^{1+\eta}(\log n)^{C_{\varepsilon}}/J^{1-\varepsilon} \ll n^{1-\frac{\eta}{2}}$ and so ~\eqref{eq:E12bound} if $\varepsilon$ is sufficiently small.

To demonstrate \eqref{eq:dyadicaeb}, let $h=(be^2,a)$. If $h \mid be^2$ then $h=h_1h_2$ for some (not necessarily unique) $h_1 \mid b$ and $h_2 \mid e^2$. This implies $h'_2 \mid e$ for $h'_2 = \prod_{p^k \mid \mid h_2} p^{\lceil k/2\rceil}$. We can bound the left-hand side of \eqref{eq:dyadicaeb} by
\[ \le \sum_{h_1 h_2 \mid a \sim A} h_1 h_2  \sum_{\substack{h'_2 \mid e \sim V \\ b \sim J \\ h_1 \mid b \mid (ae)^{\infty}}} 1 \le\sum_{h_1 h_2  \mid a \sim A } h_1 h_2  \sum_{\substack{e' \sim V/h'_2 \\ b' \sim J/h_1 \\ b'  \mid (ae')^{\infty}}} 1 .\]
By Rankin's trick,
\[ \sum_{b \le B, \, b \mid m^{\infty}} 1 \le B^{\varepsilon}\prod_{p \mid m}(1-p^{-\varepsilon})^{-1} \le B^{\varepsilon} \exp(C_{\varepsilon} \sum_{p \mid m} p^{-\varepsilon}) \le B^{\varepsilon} \exp(C_{\varepsilon}\omega(m))\]
for every $\varepsilon>0$. Hence
\[\sum_{\substack{e' \sim V/h'_2 \\ b' \sim J/h_1 \\ b'  \mid (ae')^{\infty}}} 1 \ll \sum_{e' \sim V/h'_2} (J/h_1)^{\varepsilon} \exp(C_{\varepsilon} (\omega(a)+\omega(e'))) \ll (J/h_1)^{\varepsilon} V(2+\log V)^{C_{\varepsilon}}\exp(C_{\varepsilon} \omega(a))\]
and so, since $h_1 h_2 \ll A$,
\begin{align} \sum_{\substack{a \sim A\\ e \sim V}} \sum_{\substack{ b \sim J \\ b \mid (ae)^{\infty}}} (be^2,a) &\ll (2+\log V)^{C_{\varepsilon}} J^{\varepsilon}V \sum_{h_1 h_2 \mid a \sim A } h_1^{1-\varepsilon} h_2 \exp(C_{\varepsilon} \omega(a))\\
	&\ll (2+\log V)^{C_{\varepsilon}} J^{\varepsilon} AV\sum_{a \sim A} \exp(C_{\varepsilon}\omega(a)) \tau(a)^2 \ll( 2+\log AV)^{C_{\varepsilon}} J^{\varepsilon} A^2V .
\end{align}
This proves \eqref{eq:dyadicaeb}, and completes the proof of Lemma~\ref{lem:E1}.
\subsection{Proof of Lemma~\ref{lem:m}}
Let $1 \le a \le n^{\eta}$ be squarefree with $(a,n)=1$. The expression $N_1(a)$ matches $M_2(a)$ defined in \eqref{eq:M2def}, so
\[ N_1(a) = \textup{Li}(n-1) \sum_{\substack{ d \le D\\(d,n)=1}}\frac{\mu^2(d)}{\phi([d,a])} + O\left(\frac{n}{a \log n} \right)\]
by Lemma~\ref{lem:mts}, its proof and our choice of $V$. The $d$-sum is estimated in Lemma~\ref{lem:mts2}, giving
\[ N_1(a) = \phi(n) \left( \frac{2^{\omega(a)}}{a}\frac{\log D}{\log n} - \frac{D_a}{\phi(a)\log n}\right) + O_k \left(  \frac{n(\log \log n)^2}{a\log n} \right)\]
where we used \eqref{eq:dabnd}. It remains to deal with $N_2(a)$. By completing the $e$-range we have
\begin{align}
	N_2(a) =  \sum_{\substack{e \ge 1 \\ s<(n-2)/(e^2 D)\\(se,n)=1}} \frac{\mu(e)\textup{Li}(n-se^2D-1)}{\phi([se^2,a])} + O\left( \frac{n}{V}\right).
\end{align}
We now use  $\textup{Li}(n-se^2D-1) =  \textup{Li}(n-1) + O(se^2 D/\log n)$ to decompose $N_2(a)$ as
\[	N_2(a)= N_{2,1}(a)+N_{2,2}(a) +  O\left( \frac{n}{V}\right)\]
where 
\begin{align}
	N_{2,1}(a) = \textup{Li}(n-1) \sum_{\substack{se^2<(n-2)/D \\ (se,n)=1}} \frac{\mu(e)}{\phi([se^2,a])}=\textup{Li}(n-1) \sum_{\substack{m <(n-2)/D \\ (m,n)=1}} \frac{\mu^2(m)}{\phi([m,a])}
\end{align} 
and
\begin{align}
\label{eq:m222}	N_{2,2}(a) \ll \frac{D}{\log n} \sum_{\substack{se^2<(n-2)/D \\ (se,n)=1}} \frac{\mu^2(e)se^2}{\phi([se^2,a])}  = \frac{D}{\log n} \sum_{\substack{m< (n-2)/D\\(m,n)=1}} \frac{m 2^{\#\{p^2 \mid m\}}}{\phi([m,a])}.
\end{align} 
We let $g=(a,m)$ in the right-hand side of \eqref{eq:m222} and use \eqref{eq:lcm}--\eqref{eq:sub} to find
\[N_{2,2}(a) \ll \frac{D}{ \phi(a)\log n} \sum_{g \mid a}g\sum_{g \mid m < n/D }\frac{(m/g) 2^{\#\{p^2 \mid m\}}}{\phi(m/g)}.\]
Let $S(X):=\sum_{m \le X} (m/\phi(m))2^{\#\{p^2 \mid m\}}$. Note $S(X) \ll X$ by \cite[Thm.~2.14]{MV}. Hence
\[ N_{2,2}(a) \ll \frac{D}{\phi(a)\log n } \sum_{g \mid a} g2^{\omega(g)}S\left( \frac{n}{Dg}\right) \ll \frac{n}{\phi(a)\log n} 3^{\omega(a)} \ll_k \frac{n}{a \log n}.\]
To conclude we apply Lemma~\ref{lem:mts2} with $n/D$ in place of $D$ to estimate  $N_{2,1}(a)$, obtaining
\[ N_2(a) = \phi(n) \left( \frac{2^{\omega(a)}}{a}\frac{\log (n/D)}{\log n} - \frac{D_a}{\phi(a)\log n}\right) + O_k \left(  \frac{n(\log \log n)^2}{a\log n} \right).\]

\section{Elementary proof of (1.8)}\label{sec:motivation}
Let $X \sim N(0,1)$. By L\'{e}vy's continuity theorem \cite[Thm.~3.3.17]{Durrett}, \eqref{eq:uprime} is equivalent to
\[ \EE e^{it \left(\frac{\omega(\widetilde{X}_n)-2\log \log n}{(2 \log \log n)^{1/2} } \right)} = \big( \sum_{m \le n}2^{\omega(m)}\big)^{-1} \sum_{m \le n} 2^{\omega(m)}e^{it \left(\frac{\omega(m)-2\log \log n}{(2 \log \log n)^{1/2} } \right)} \to \EE e^{it X} = e^{-\frac{t^2}{2}}, \qquad t \in \RR,\]
a relation that is readily verified using the Selberg--Delange method, as observed by Elliott \cite[p.~1364]{Elliott}. We give an elementary proof of \eqref{eq:uprime} using Corollary~\ref{cor:gs}. One only needs to verify \eqref{eq:cond2} and \eqref{eq:cond1}, both of which follow from the next lemma, whose proof is a streamlined version of the arguments in \cite[\S3]{EG}.
\begin{lem}\label{lem:sa}
For positive integers $1 \le a \le n$ such that $\omega(a) \le k$ we have
	\[\PP(a \textup{ divides } \widetilde{X}_n) = \frac{2^{\omega(a)}}{a}\bigg(1 -\frac{\log a}{\log n}+ O_{k}\bigg(\frac{1}{\log n} + \sum_{p \mid a} \frac{1}{p}\bigg)\bigg).\]
\end{lem}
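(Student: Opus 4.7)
The plan is to write the probability as $\PP(a\mid\widetilde{X}_n)=S(n,a)/S(n,1)$, where $S(X,a):=\sum_{m\le X,\,a\mid m}2^{\omega(m)}$, and to estimate numerator and denominator elementarily using the Dirichlet convolution $2^{\omega(m)}=\sum_{d\mid m}\mu^2(d)$. Swapping summation gives $S(X,a)=\sum_{d}\mu^2(d)\lfloor X/[a,d]\rfloor$. Since for squarefree $d$ one has $(a,d)=(\mathrm{rad}(a),d)$, I would parametrise $d=gd'$ with $g=(a,d)$ ranging over divisors of $\mathrm{rad}(a)$ and $d'$ squarefree and coprime to $a$. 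The condition $[a,d]\le X$ then translates to $d'\le X/a$ independently of $g$, and summing $g$ over $\mathrm{rad}(a)$ produces a clean factor $2^{\omega(a)}$; this leads to
\[ S(X,a)=\frac{2^{\omega(a)}X}{a}\sum_{\substack{d'\le X/a\\(d',a)=1,\,\mu^2(d')=1}}\frac{1}{d'}+O_k\!\Bigl(\frac{X}{a}\Bigr), \]
where the error comes from the integer parts.

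To evaluate the inner sum, I would expand $\mu^2(d')=\sum_{e^2\mid d'}\mu(e)$ and combine with the elementary Möbius-inverted identity
\[ \sum_{\substack{m\le Y\\(m,a)=1}}\frac{1}{m}=\frac{\phi(a)}{a}\Bigl(\log Y+\gamma+\sum_{p\mid a}\frac{\log p}{p-1}\Bigr)+O\!\Bigl(\frac{\tau(a)}{Y}\Bigr), \]
derived directly from $\sum_{m\le Y}1/m=\log Y+\gamma+O(1/Y)$. Completing the resulting $e$-series to infinity at negligible cost and using $\sum_{(e,a)=1}\mu(e)/e^2=\zeta(2)^{-1}\prod_{p\mid a}(1-p^{-2})^{-1}$, one gets
\[ \sum_{\substack{d'\le X/a\\(d',a)=1,\,\mu^2(d')=1}}\frac{1}{d'}=\frac{1}{\zeta(2)}\prod_{p\mid a}\frac{1}{1+1/p}\log(X/a)+C(a)+O_k(\cdot), \]
with $C(a)=O_k(1)$ under $\omega(a)\le k$. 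The analogue at $a=1$ is the classical $S(X,1)=X\log X/\zeta(2)+O(X)$.

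Taking the ratio yields
\[ \PP(a\mid\widetilde{X}_n)=\frac{2^{\omega(a)}}{a}\prod_{p\mid a}\frac{1}{1+1/p}\cdot\frac{\log(n/a)+O_k(1)}{\log n+O(1)}, \]
and the lemma follows by writing $\log(n/a)/\log n=1-\log a/\log n$, Taylor-expanding $\prod_{p\mid a}(1+1/p)^{-1}=1+O\bigl(\sum_{p\mid a}1/p\bigr)$, and absorbing cross-terms via $|\log a/\log n|\le 1$. The main obstacle is the careful bookkeeping needed to confirm that all the auxiliary constants arising en route—the constant $C(a)$, the Mertens-type correction $\sum_{p\mid a}\log p/(p-1)$, and the tails of the $e$-series—really are $O_k(1)$ when $\omega(a)\le k$; this reduces to the uniform bound $\log p/p\ll 1$ combined with $\omega(a)\le k$. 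The edge case where $X/a$ is only a bounded constant (i.e.\ $a$ very close to $n$) has to be treated by a direct trivial estimate, but there both sides of the claimed identity have size $\asymp_k 2^{\omega(a)}/(a\log n)$, so the matching is automatic.
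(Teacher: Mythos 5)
Your proposal is correct but follows a genuinely different route from the paper's. The paper defines $S_a(n):=\sum_{a\mid r\le n}2^{\omega(r)}$ and compares it directly to $2^{\omega(a)}S_1(n/a)$ by a short inclusion--exclusion: setting $d=(m,a^\infty)$ and $m'=m/d$ makes $2^{\omega(am)}$ factor as $2^{\omega(ad)}2^{\omega(m')}$, and the contributions from $d>1$ and from reinstating the coprimality condition on $m'$ are bounded crudely by $\sum_{1<d\mid a^\infty}2^{O_k(1)+\omega(a)}S_1(n/(ad))$, which after dividing by $S_1(n)$ gives precisely the $O_k\bigl(\sum_{p\mid a}1/p\bigr)$ error; the only analytic input is the single asymptotic $S_1(X)=\zeta(2)^{-1}X\log X(1+O(1/\log X))$. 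You instead expand $2^{\omega}=\mu^2*1$, write $S(X,a)=\sum_d\mu^2(d)\lfloor X/[a,d]\rfloor$, factor $d=gd'$ with $g=(d,a)\mid\mathrm{rad}(a)$ so that $[a,d]=ad'$ and the $g$-sum detaches as $2^{\omega(a)}$, and then evaluate $\sum_{d'\le X/a,(d',a)=1}\mu^2(d')/d'$ explicitly via the $\mu^2=\sum_{e^2\mid\cdot}\mu(e)$ identity and a Mertens-type estimate. Both are elementary and correct; the paper's argument is shorter and never needs to track explicit constants (the $\gamma$-type and $\sum_{p\mid a}\log p/(p-1)$ corrections, the tails of the $e$-series), whereas yours produces a cleaner intermediate formula with an explicit main term at the cost of more bookkeeping. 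You correctly flag that $\omega(a)\le k$ is what keeps all those $a$-dependent constants $O_k(1)$, and that the degenerate range $n/a=O(1)$ needs a separate trivial check (which does go through, since there both sides are $\ll_k 2^{\omega(a)}/(a\log n)$).
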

\begin{proof}
	Let $S_a(n) := \sum_{a \mid r \le n} 2^{\omega(r)}$. We have
	\begin{align}
		S_a(n) &= \sum_{m\le n/a} 2^{\omega(am)} = \sum_{d \mid a^{\infty}} 2^{\omega(ad)}\sum_{m' \le n/(ad), \, (m',a)=1} 2^{\omega(m')}=A+B
	\end{align}
	where $d$ stands for $(m,a^{\infty})$, $A$ is the contribution of $d=1$ and $B$ is the contribution of $d>1$. We have $0 \le B \le \sum_{1<d \mid a^{\infty}} 2^{\omega(ad)} S_1(n/(ad))$ by dropping the coprimality condition. For $A$ we have
	\[ A = 2^{\omega(a)}(\sum_{m'\le n/a} 2^{\omega(m')} -\sum_{1<d \mid a^{\infty}} 2^{\omega(d)} \sum_{m'' \le n/(ad), \, (m'',a)=1} 2^{\omega(m'')})\]
	and so, by dropping the coprimality condition $(m'',a)=1$,
	\[ 0 \le  2^{\omega(a)}\sum_{m \le n/a}2^{\omega(m)}- A \le \sum_{1<d \mid a^{\infty}}2^{\omega(a)+\omega(d)}\sum_{m \le n/(ad)} 2^{\omega(m)}.\]
	We have proved
	\begin{equation}\label{eq:sas1} \left|S_a(n)-2^{\omega(a)}S_1(n/a)\right|  \le \sum_{1<d \mid a^{\infty}}(2^{\omega(ad)}+2^{\omega(a)+\omega(d)})S_1(n/(da)).
	\end{equation}
	We divide \eqref{eq:sas1} by $S_1(n)$ and use $S_1(n) = \zeta(2)^{-1} n \log n ( 1 +O(1/\log n))$ \cite[p.~42]{MV}.
\end{proof}
\begin{rem}
Recall the notation \eqref{eq:defV}. Given $k \ge 1$ and a continuous function $f \colon \RR^k \to \CC$ such that the (closure of the) support of $f$ is $\subseteq\{ (x_1,\ldots,x_k): x_1,\ldots,x_k >0, \, x_1+\ldots+x_k < 1\}$, with further work one can compute 
\[ \lim_{n \to \infty} \EE \sum_{j_1,\ldots,j_k \text{ distinct}} f( V(\widetilde{X}_n)_{j_1},\ldots,V(\widetilde{X}_n)_{j_k})\]
using Lemma~\ref{lem:sa} and show that it coincides with 
\[ \EE \sum_{j_1,\ldots,j_k \text{ distinct}} f( L_{j_1},\ldots,L_{j_k})\]
where $(L_1,L_2,\ldots)$ is distributed according to $\textup{PD}(2)$. This in turn will imply that $V(\widetilde{X}_n)$ tends in distribution to $\textup{PD}(2)$, see \cite{arratia2014extensions} and especially Remark~12 of \cite{bharadwaj2024large}.
\end{rem}
\begin{acknowledgement}
We thank Brad Rodgers for discussions around Corollary~\ref{cor:PD} and useful suggestions, and the anonymous referees for corrections, suggestions as well useful comments which led us to a simplified proof of Elliott's conjecture using Proposition~\ref{prop:titch}.
\end{acknowledgement}

\bibliographystyle{abbrv}

\end{document}